\chardef\@x10\chardef\@xv60
\def\tcitime{
\def\@time{%
  \@minute\time\@hour\@minute\divide\@hour\@xv
  \ifnum\@hour<\@x 0\fi\the\@hour:%
  \multiply\@hour\@xv\advance\@minute-\@hour
  \ifnum\@minute<\@x 0\fi\the\@minute
  }}%
\def\QCTOpt[#1]#2{%
  \def\QCTOptB{#1}
  \def\QCTOptA{#2}
}
\def\QCTNOpt#1{%
  \def\QCTOptA{#1}
  \let\QCTOptB\empty
}
\def\Qct{%
  \@ifnextchar[{%
    \QCTOpt}{\QCTNOpt}
}
\def\QCBOpt[#1]#2{%
  \def\QCBOptB{#1}
  \def\QCBOptA{#2}
}
\def\QCBNOpt#1{%
  \def\QCBOptA{#1}
  \let\QCBOptB\empty
}
\def\Qcb{%
  \@ifnextchar[{%
    \QCBOpt}{\QCBNOpt}
}
\def\PrepCapArgs{%
  \ifx\QCBOptA\empty
    \ifx\QCTOptA\empty
      {}%
    \else
      \ifx\QCTOptB\empty
        {\QCTOptA}%
      \else
        [\QCTOptB]{\QCTOptA}%
      \fi
    \fi
  \else
    \ifx\QCBOptA\empty
      {}%
    \else
      \ifx\QCBOptB\empty
        {\QCBOptA}%
      \else
        [\QCBOptB]{\QCBOptA}%
      \fi
    \fi
  \fi
}
\def\GRAPHICSPS#1{%
 \ifcase\GRAPHICSTYPE
   \special{ps: #1}%
 \or
   \special{language "PS", include "#1"}%
 \fi
}%
\def\graffile#1#2#3#4{%
    \leavevmode
    \raise -#4 \BOXTHEFRAME{%
        \hbox to #2{\raise #3\hbox to #2{\null #1\hfil}}}%
}%
\def\draftbox#1#2#3#4{%
 \leavevmode\raise -#4 \hbox{%
  \frame{\rlap{\protect\tiny #1}\hbox to #2%
   {\vrule height#3 width\z@ depth\z@\hfil}%
  }%
 }%
}%
\newif\ifwasdraft
\def\GRAPHIC#1#2#3#4#5{%
 \ifnum\draft=\@ne\draftbox{#2}{#3}{#4}{#5}%
  \else\graffile{#1}{#3}{#4}{#5}%
  \fi
 }%
\def\addtoLaTeXparams#1{%
    \edef\LaTeXparams{\LaTeXparams #1}}%
\newif\ifBoxFrame \BoxFramefalse
\newif\ifOverFrame \OverFramefalse
\newif\ifUnderFrame \UnderFramefalse
\def\BOXTHEFRAME#1{%
   \hbox{%
      \ifBoxFrame
         \frame{#1}%
      \else
         {#1}%
      \fi
   }%
}
\def\doFRAMEparams#1{\BoxFramefalse\OverFramefalse\UnderFramefalse\readFRAMEparams#1\end}%
\def\readFRAMEparams#1{%
 \ifx#1\end%
  \let\next=\relax
  \else
  \ifx#1i\dispkind=\z@\fi
  \ifx#1d\dispkind=\@ne\fi
  \ifx#1f\dispkind=\tw@\fi
  \ifx#1t\addtoLaTeXparams{t}\fi
  \ifx#1b\addtoLaTeXparams{b}\fi
  \ifx#1p\addtoLaTeXparams{p}\fi
  \ifx#1h\addtoLaTeXparams{h}\fi
  \ifx#1X\BoxFrametrue\fi
  \ifx#1O\OverFrametrue\fi
  \ifx#1U\UnderFrametrue\fi
  \ifx#1w
    \ifnum\draft=1\wasdrafttrue\else\wasdraftfalse\fi
    \draft=\@ne
  \fi
  \let\next=\readFRAMEparams
  \fi
 \next
 }%
\def\IFRAME#1#2#3#4#5#6{%
      \bgroup
      \let\QCTOptA\empty
      \let\QCTOptB\empty
      \let\QCBOptA\empty
      \let\QCBOptB\empty
      #6%
      \parindent=0pt%
      \leftskip=0pt
      \rightskip=0pt
      \setbox0 = \hbox{\QCBOptA}%
      \@tempdima = #1\relax
      \ifOverFrame
          \typeout{This is not implemented yet}%
          \show\HELP
      \else
         \ifdim\wd0>\@tempdima
            \advance\@tempdima by \@tempdima
            \ifdim\wd0 >\@tempdima
               \textwidth=\@tempdima
               \setbox1 =\vbox{%
                  \noindent\hbox to \@tempdima{\hfill\GRAPHIC{#5}{#4}{#1}{#2}{#3}\hfill}\\%
                  \noindent\hbox to \@tempdima{\parbox[b]{\@tempdima}{\QCBOptA}}%
               }%
               \wd1=\@tempdima
            \else
               \textwidth=\wd0
               \setbox1 =\vbox{%
                 \noindent\hbox to \wd0{\hfill\GRAPHIC{#5}{#4}{#1}{#2}{#3}\hfill}\\%
                 \noindent\hbox{\QCBOptA}%
               }%
               \wd1=\wd0
            \fi
         \else
            \ifdim\wd0>0pt
              \hsize=\@tempdima
              \setbox1 =\vbox{%
                \unskip\GRAPHIC{#5}{#4}{#1}{#2}{0pt}%
                \break
                \unskip\hbox to \@tempdima{\hfill \QCBOptA\hfill}%
              }%
              \wd1=\@tempdima
           \else
              \hsize=\@tempdima
              \setbox1 =\vbox{%
                \unskip\GRAPHIC{#5}{#4}{#1}{#2}{0pt}%
              }%
              \wd1=\@tempdima
           \fi
         \fi
         \@tempdimb=\ht1
         \advance\@tempdimb by \dp1
         \advance\@tempdimb by -#2%
         \advance\@tempdimb by #3%
         \leavevmode
         \raise -\@tempdimb \hbox{\box1}%
      \fi
      \egroup%
}%
\def\DFRAME#1#2#3#4#5{%
 \begin{center}
     \let\QCTOptA\empty
     \let\QCTOptB\empty
     \let\QCBOptA\empty
     \let\QCBOptB\empty
     \ifOverFrame 
        #5\QCTOptA\par
     \fi
     \GRAPHIC{#4}{#3}{#1}{#2}{\z@}
     \ifUnderFrame 
        \nobreak\par #5\QCBOptA
     \fi
 \end{center}%
 }%
\def\FFRAME#1#2#3#4#5#6#7{%
 \begin{figure}[#1]%
  \let\QCTOptA\empty
  \let\QCTOptB\empty
  \let\QCBOptA\empty
  \let\QCBOptB\empty
  \ifOverFrame
    #4
    \ifx\QCTOptA\empty
    \else
      \ifx\QCTOptB\empty
        \caption{\QCTOptA}%
      \else
        \caption[\QCTOptB]{\QCTOptA}%
      \fi
    \fi
    \ifUnderFrame\else
      \label{#5}%
    \fi
  \else
    \UnderFrametrue%
  \fi
  \begin{center}\GRAPHIC{#7}{#6}{#2}{#3}{\z@}\end{center}%
  \ifUnderFrame
    #4
    \ifx\QCBOptA\empty
      \caption{}%
    \else
      \ifx\QCBOptB\empty
        \caption{\QCBOptA}%
      \else
        \caption[\QCBOptB]{\QCBOptA}%
      \fi
    \fi
    \label{#5}%
  \fi
  \end{figure}%
 }%
\def\makeactives{
  \catcode`\"=\active
  \catcode`\;=\active
  \catcode`\:=\active
  \catcode`\'=\active
  \catcode`\~=\active
}
   \gdef\activesoff{%
      \def"{\string"}
      \def;{\string;}
      \def:{\string:}
      \def'{\string'}
      \def~{\string~}
    }
\def\FRAME#1#2#3#4#5#6#7#8{%
 \bgroup
 \@ifundefined{bbl@deactivate}{}{\activesoff}
 \ifnum\draft=\@ne
   \wasdrafttrue
 \else
   \wasdraftfalse%
 \fi
 \def\LaTeXparams{}%
 \dispkind=\z@
 \def\LaTeXparams{}%
 \doFRAMEparams{#1}%
 \ifnum\dispkind=\z@\IFRAME{#2}{#3}{#4}{#7}{#8}{#5}\else
  \ifnum\dispkind=\@ne\DFRAME{#2}{#3}{#7}{#8}{#5}\else
   \ifnum\dispkind=\tw@
    \edef\@tempa{\noexpand\FFRAME{\LaTeXparams}}%
    \@tempa{#2}{#3}{#5}{#6}{#7}{#8}%
    \fi
   \fi
  \fi
  \ifwasdraft\draft=1\else\draft=0\fi{}%
  \egroup
 }%
\def\TEXUX#1{"texux"}
\def\limfunc#1{\mathop{\rm #1}}%
\long\def\QQQ#1#2{%
     \long\expandafter\def\csname#1\endcsname{#2}}%
\long\def\QQA#1#2{}%
\def\QTR#1#2{{\csname#1\endcsname #2}}
\def\EXPAND#1[#2]#3{}%
\def\NOEXPAND#1[#2]#3{}%
\def\LaTeXparent#1{}%
\def\ChildStyles#1{}%
\def\ChildDefaults#1{}%
\def\QTagDef#1#2#3{}%
\def\QQfnmark#1{\footnotemark}
\def\makeatletter\input gnuindex.sty\makeatother\makeindex{\makeatletter\input gnuindex.sty\makeatother\makeindex}%
\def\initial#1{\bigbreak{\raggedright\large\bf #1}\kern 2\p@\penalty3000}}%
 \def\abstract{%
  \if@twocolumn
   \section*{Abstract (Not appropriate in this style!)}%
   \else \small 
   \begin{center}{\bf Abstract\vspace{-.5em}\vspace{\z@}}\end{center}%
   \quotation 
   \fi
  }%
   \def\registered{\relax\ifmmode{}\r@gistered
                    \else$\m@th\r@gistered$\fi}%
 \def\r@gistered{^{\ooalign
  {\hfil\raise.07ex\hbox{$\scriptstyle\rm\text{R}$}\hfil\crcr
  \mathhexbox20D}}}}{}%
\newdimen\theight
\def\Column{%
 \vadjust{\setbox\z@=\hbox{\scriptsize\quad\quad tcol}%
  \theight=\ht\z@\advance\theight by \dp\z@\advance\theight by \lineskip
  \kern -\theight \vbox to \theight{%
   \rightline{\rlap{\box\z@}}%
   \vss
   }%
  }%
 }%
\def\qed{%
 \ifhmode\unskip\nobreak\fi\ifmmode\ifinner\else\hskip5\p@\fi\fi
 \hbox{\hskip5\p@\vrule width4\p@ height6\p@ depth1.5\p@\hskip\p@}%
 }%
\def\miss{\hbox{\vrule height2\p@ width 2\p@ depth\z@}}%
\def\tcol#1{{\baselineskip=6\p@ \vcenter{#1}} \Column}  %
\def\newfmtname{LaTeX2e}
\def\chkcompat{%
   \if@compatibility
   \else
     \usepackage{latexsym}
   \fi
}
  \DeclareOldFontCommand{\rm}{\normalfont\rmfamily}{\mathrm}
  \DeclareOldFontCommand{\sf}{\normalfont\sffamily}{\mathsf}
  \DeclareOldFontCommand{\tt}{\normalfont\ttfamily}{\mathtt}
  \DeclareOldFontCommand{\bf}{\normalfont\bfseries}{\mathbf}
  \DeclareOldFontCommand{\it}{\normalfont\itshape}{\mathit}
  \DeclareOldFontCommand{\sl}{\normalfont\slshape}{\@nomath\sl}
  \DeclareOldFontCommand{\sc}{\normalfont\scshape}{\@nomath\sc}
\def\alpha{{\Greekmath 010B}}%
\def\beta{{\Greekmath 010C}}%
\def\gamma{{\Greekmath 010D}}%
\def\delta{{\Greekmath 010E}}%
\def\epsilon{{\Greekmath 010F}}%
\def\zeta{{\Greekmath 0110}}%
\def\eta{{\Greekmath 0111}}%
\def\theta{{\Greekmath 0112}}%
\def\iota{{\Greekmath 0113}}%
\def\kappa{{\Greekmath 0114}}%
\def\lambda{{\Greekmath 0115}}%
\def\mu{{\Greekmath 0116}}%
\def\nu{{\Greekmath 0117}}%
\def\xi{{\Greekmath 0118}}%
\def\pi{{\Greekmath 0119}}%
\def\rho{{\Greekmath 011A}}%
\def\sigma{{\Greekmath 011B}}%
\def\tau{{\Greekmath 011C}}%
\def\upsilon{{\Greekmath 011D}}%
\def\phi{{\Greekmath 011E}}%
\def\chi{{\Greekmath 011F}}%
\def\psi{{\Greekmath 0120}}%
\def\omega{{\Greekmath 0121}}%
\def\varepsilon{{\Greekmath 0122}}%
\def\vartheta{{\Greekmath 0123}}%
\def\varpi{{\Greekmath 0124}}%
\def\varrho{{\Greekmath 0125}}%
\def\varsigma{{\Greekmath 0126}}%
\def\varphi{{\Greekmath 0127}}%
\def\nabla{{\Greekmath 0272}}
\def\FindBoldGroup{%
   {\setbox0=\hbox{$\mathbf{x\global\edef\theboldgroup{\the\mathgroup}}$}}%
}
\def\Greekmath#1#2#3#4{%
    \if@compatibility
        \ifnum\mathgroup=\symbold
           \mathchoice{\mbox{\boldmath$\displaystyle\mathchar"#1#2#3#4$}}%
                      {\mbox{\boldmath$\textstyle\mathchar"#1#2#3#4$}}%
                      {\mbox{\boldmath$\scriptstyle\mathchar"#1#2#3#4$}}%
                      {\mbox{\boldmath$\scriptscriptstyle\mathchar"#1#2#3#4$}}%
        \else
           \mathchar"#1#2#3#4%
        \fi 
    \else 
        \FindBoldGroup
        \ifnum\mathgroup=\theboldgroup 
           \mathchoice{\mbox{\boldmath$\displaystyle\mathchar"#1#2#3#4$}}%
                      {\mbox{\boldmath$\textstyle\mathchar"#1#2#3#4$}}%
                      {\mbox{\boldmath$\scriptstyle\mathchar"#1#2#3#4$}}%
                      {\mbox{\boldmath$\scriptscriptstyle\mathchar"#1#2#3#4$}}%
        \else
           \mathchar"#1#2#3#4%
        \fi     	    
	  \fi}
\newif\ifGreekBold  \GreekBoldfalse
\let\SAVEPBF=\pbf
\def\pbf{\GreekBoldtrue\SAVEPBF}%
  \newcounter{equationnumber}  
  \def\mathletters{%
     \addtocounter{equation}{1}
     \edef\@currentlabel{\theequation}%
     \setcounter{equationnumber}{\c@equation}
     \setcounter{equation}{0}%
     \edef\theequation{\@currentlabel\noexpand\alph{equation}}%
  }
    \def\BibTeX{{\rm B\kern-.05em{\sc i\kern-.025em b}\kern-.08em
                 T\kern-.1667em\lower.7ex\hbox{E}\kern-.125emX}}}{}%
\def\AmS{{\protect\usefont{OMS}{cmsy}{m}{n}%
                A\kern-.1667em\lower.5ex\hbox{M}\kern-.125emS}}}{}%
\let\DOTSI\relax
\def\eat@#1{}%
\def\RIfM@{\relax\ifmmode}%
\def\FN@{\futurelet\next}%
\def\iint{\DOTSI\intno@\tw@\FN@\ints@}%
\def\iiint{\DOTSI\intno@\thr@@\FN@\ints@}%
\def\iiiint{\DOTSI\intno@4 \FN@\ints@}%
\def\idotsint{\DOTSI\intno@\z@\FN@\ints@}%
\def\ints@{\findlimits@\ints@@}%
\newif\iflimtoken@
\newif\iflimits@
\def\findlimits@{\limtoken@true\ifx\next\limits\limits@true
 \else\ifx\next\nolimits\limits@false\else
 \limtoken@false\ifx\ilimits@\nolimits\limits@false\else
 \ifinner\limits@false\else\limits@true\fi\fi\fi\fi}%
\def\multint@{\int\ifnum\intno@=\z@\intdots@                          
 \else\intkern@\fi                                                    
 \ifnum\intno@>\tw@\int\intkern@\fi                                   
 \ifnum\intno@>\thr@@\int\intkern@\fi                                 
 \int}
\def\multintlimits@{\intop\ifnum\intno@=\z@\intdots@\else\intkern@\fi
 \ifnum\intno@>\tw@\intop\intkern@\fi
 \ifnum\intno@>\thr@@\intop\intkern@\fi\intop}%
\def\intic@{%
    \mathchoice{\hskip.5em}{\hskip.4em}{\hskip.4em}{\hskip.4em}}%
\def\negintic@{\mathchoice
 {\hskip-.5em}{\hskip-.4em}{\hskip-.4em}{\hskip-.4em}}%
\def\ints@@{\iflimtoken@                                              
 \def\ints@@@{\iflimits@\negintic@
   \mathop{\intic@\multintlimits@}\limits                             
  \else\multint@\nolimits\fi                                          
  \eat@}
 \else                                                                
 \def\ints@@@{\iflimits@\negintic@
  \mathop{\intic@\multintlimits@}\limits\else
  \multint@\nolimits\fi}\fi\ints@@@}%
\def\intkern@{\mathchoice{\!\!\!}{\!\!}{\!\!}{\!\!}}%
\def\plaincdots@{\mathinner{\cdotp\cdotp\cdotp}}%
\def\intdots@{\mathchoice{\plaincdots@}%
 {{\cdotp}\mkern1.5mu{\cdotp}\mkern1.5mu{\cdotp}}%
 {{\cdotp}\mkern1mu{\cdotp}\mkern1mu{\cdotp}}%
 {{\cdotp}\mkern1mu{\cdotp}\mkern1mu{\cdotp}}}%
\def\RIfM@{\relax\protect\ifmmode}
\def\text{\RIfM@\expandafter\text@\else\expandafter\mbox\fi}
\let\nfss@text\text
\def\text@#1{\mathchoice
   {\textdef@\displaystyle\f@size{#1}}%
   {\textdef@\textstyle\tf@size{\firstchoice@false #1}}%
   {\textdef@\textstyle\sf@size{\firstchoice@false #1}}%
   {\textdef@\textstyle \ssf@size{\firstchoice@false #1}}%
   \glb@settings}
\def\textdef@#1#2#3{\hbox{{%
                    \everymath{#1}%
                    \let\f@size#2\selectfont
                    #3}}}
\newif\iffirstchoice@
\def\Let@{\relax\iffalse{\fi\let\\=\cr\iffalse}\fi}%
\def\vspace@{\def\vspace##1{\crcr\noalign{\vskip##1\relax}}}%
\def\multilimits@{\bgroup\vspace@\Let@
 \baselineskip\fontdimen10 \scriptfont\tw@
 \advance\baselineskip\fontdimen12 \scriptfont\tw@
 \lineskip\thr@@\fontdimen8 \scriptfont\thr@@
 \lineskiplimit\lineskip
 \vbox\bgroup\ialign\bgroup\hfil$\m@th\scriptstyle{##}$\hfil\crcr}%
\def\Sb{_\multilimits@}%
\def\endSb{\crcr\egroup\egroup\egroup}%
\def\Sp{^\multilimits@}%
\newdimen\ex@
\def\rightarrowfill@#1{$#1\m@th\mathord-\mkern-6mu\cleaders
 \hbox{$#1\mkern-2mu\mathord-\mkern-2mu$}\hfill
 \mkern-6mu\mathord\rightarrow$}%
\def\leftarrowfill@#1{$#1\m@th\mathord\leftarrow\mkern-6mu\cleaders
 \hbox{$#1\mkern-2mu\mathord-\mkern-2mu$}\hfill\mkern-6mu\mathord-$}%
\def\leftrightarrowfill@#1{$#1\m@th\mathord\leftarrow
\mkern-6mu\cleaders
 \hbox{$#1\mkern-2mu\mathord-\mkern-2mu$}\hfill
 \mkern-6mu\mathord\rightarrow$}%
\def\overrightarrow{\mathpalette\overrightarrow@}%
\def\overrightarrow@#1#2{\vbox{\ialign{##\crcr\rightarrowfill@#1\crcr
 \noalign{\kern-\ex@\nointerlineskip}$\m@th\hfil#1#2\hfil$\crcr}}}%
\def\overleftarrow{\mathpalette\overleftarrow@}%
\def\overleftarrow@#1#2{\vbox{\ialign{##\crcr\leftarrowfill@#1\crcr
 \noalign{\kern-\ex@\nointerlineskip}$\m@th\hfil#1#2\hfil$\crcr}}}%
\def\overleftrightarrow{\mathpalette\overleftrightarrow@}%
\def\overleftrightarrow@#1#2{\vbox{\ialign{##\crcr
   \leftrightarrowfill@#1\crcr
 \noalign{\kern-\ex@\nointerlineskip}$\m@th\hfil#1#2\hfil$\crcr}}}%
\def\underrightarrow{\mathpalette\underrightarrow@}%
\def\underrightarrow@#1#2{\vtop{\ialign{##\crcr$\m@th\hfil#1#2\hfil
  $\crcr\noalign{\nointerlineskip}\rightarrowfill@#1\crcr}}}%
\def\underleftarrow{\mathpalette\underleftarrow@}%
\def\underleftarrow@#1#2{\vtop{\ialign{##\crcr$\m@th\hfil#1#2\hfil
  $\crcr\noalign{\nointerlineskip}\leftarrowfill@#1\crcr}}}%
\def\underleftrightarrow{\mathpalette\underleftrightarrow@}%
\def\underleftrightarrow@#1#2{\vtop{\ialign{##\crcr$\m@th
  \hfil#1#2\hfil$\crcr
 \noalign{\nointerlineskip}\leftrightarrowfill@#1\crcr}}}%
\def\qopnamewl@#1{\mathop{\operator@font#1}\nlimits@}
\let\nlimits@\displaylimits
\def\setboxz@h{\setbox\z@\hbox}
\def\varlim@#1#2{\mathop{\vtop{\ialign{##\crcr
 \hfil$#1\m@th\operator@font lim$\hfil\crcr
 \noalign{\nointerlineskip}#2#1\crcr
 \noalign{\nointerlineskip\kern-\ex@}\crcr}}}}
 \def\rightarrowfill@#1{\m@th\setboxz@h{$#1-$}\ht\z@\z@
  $#1\copy\z@\mkern-6mu\cleaders
  \hbox{$#1\mkern-2mu\box\z@\mkern-2mu$}\hfill
  \mkern-6mu\mathord\rightarrow$}
\def\leftarrowfill@#1{\m@th\setboxz@h{$#1-$}\ht\z@\z@
  $#1\mathord\leftarrow\mkern-6mu\cleaders
  \hbox{$#1\mkern-2mu\copy\z@\mkern-2mu$}\hfill
  \mkern-6mu\box\z@$}
\def\projlim{\qopnamewl@{proj\,lim}}
\def\injlim{\qopnamewl@{inj\,lim}}
\def\varinjlim{\mathpalette\varlim@\rightarrowfill@}
\def\varprojlim{\mathpalette\varlim@\leftarrowfill@}
\def\varliminf{\mathpalette\varliminf@{}}
\def\varliminf@#1{\mathop{\underline{\vrule\@depth.2\ex@\@width\z@
   \hbox{$#1\m@th\operator@font lim$}}}}
\def\varlimsup{\mathpalette\varlimsup@{}}
\def\varlimsup@#1{\mathop{\overline
  {\hbox{$#1\m@th\operator@font lim$}}}}
\def\align{\@verbatim \frenchspacing\@vobeyspaces \@alignverbatim
You are using the "align" environment in a style in which it is not defined.}
\let\csname endalign*\endcsname =\endtrivlist
\def\alignat{\@verbatim \frenchspacing\@vobeyspaces \@alignatverbatim
You are using the "alignat" environment in a style in which it is not defined.}
\let\csname endalignat*\endcsname =\endtrivlist
\def\xalignat{\@verbatim \frenchspacing\@vobeyspaces \@xalignatverbatim
You are using the "xalignat" environment in a style in which it is not defined.}
\let\csname endxalignat*\endcsname =\endtrivlist
\def\gather{\@verbatim \frenchspacing\@vobeyspaces \@gatherverbatim
You are using the "gather" environment in a style in which it is not defined.}
\let\csname endgather*\endcsname =\endtrivlist
\def\multiline{\@verbatim \frenchspacing\@vobeyspaces \@multilineverbatim
You are using the "multiline" environment in a style in which it is not defined.}
\let\csname endmultiline*\endcsname =\endtrivlist
\def\arrax{\@verbatim \frenchspacing\@vobeyspaces \@arraxverbatim
You are using a type of "array" construct that is only allowed in AmS-LaTeX.}
\def\tabulax{\@verbatim \frenchspacing\@vobeyspaces \@tabulaxverbatim
You are using a type of "tabular" construct that is only allowed in AmS-LaTeX.}
\let\csname endarrax*\endcsname =\endtrivlist
\let\csname endtabulax*\endcsname =\endtrivlist
\def\@@eqncr{\let\@tempa\relax
    \ifcase\@eqcnt \def\@tempa{& & &}\or \def\@tempa{& &}%
      \else \def\@tempa{&}\fi
     \@tempa
     \if@eqnsw
        \iftag@
           \@taggnum
        \else
           \@eqnnum\stepcounter{equation}%
        \fi
     \fi
     \global\tag@false
     \global\@eqnswtrue
     \global\@eqcnt\z@\cr}
 \def\endequation{%
     \ifmmode\ifinner 
      \iftag@
        \addtocounter{equation}{-1} 
        $\hfil
           \displaywidth\linewidth\@taggnum\egroup \endtrivlist
        \global\tag@false
        \global\@ignoretrue   
      \else
        $\hfil
           \displaywidth\linewidth\@eqnnum\egroup \endtrivlist
        \global\tag@false
        \global\@ignoretrue 
      \fi
     \else   
      \iftag@
        \addtocounter{equation}{-1} 
        \eqno \hbox{\@taggnum}
        \global\tag@false%
        $$\global\@ignoretrue
      \else
        \eqno \hbox{\@eqnnum}
        $$\global\@ignoretrue
      \fi
     \fi\fi
 } 
 \newif\iftag@ \tag@false
 \def\tag{\@ifnextchar*{\@tagstar}{\@tag}}
 \def\@tag#1{%
     \global\tag@true
     \global\def\@taggnum{(#1)}}
 \def\@tagstar*#1{%
     \global\tag@true
     \global\def\@taggnum{#1}%
}
\theoremstyle{definition}
\theoremstyle{remark}
\numberwithin{equation}{section}
\begin{document}
\title[Quasiconvexity and density topology]{Quasiconvexity and density topology}
\author{Patrick J. Rabier}
\address{Department of mathematics, University of Pittsburgh, Pittsburgh, PA 15260}
\email{rabier@imap.pitt.edu}
\subjclass{52A41, 26B05}
\keywords{Density topology, quasiconvex function, approximate continuity, point of
continuity}
\maketitle

\begin{abstract}
We prove that if $f:\Bbb{R}^{N}\rightarrow \overline{\Bbb{R}}$ is
quasiconvex and $U\subset \Bbb{R}^{N}$ is open in the density topology, then 
$\sup_{U}f=\limfunc{ess}\sup_{U}f,$ while $\inf_{U}f=\limfunc{ess}\inf_{U}f$
if and only if the equality holds when $U=\Bbb{R}^{N}.$ The first (second)
property is typical of lsc (usc) functions and, even when $U$ is an ordinary
open subset, there seems to be no record that they both hold for all
quasiconvex functions.

This property ensures that the pointwise extrema of $f$ on any nonempty
density open subset can be arbitrarily closely approximated by values of $f$
achieved on ``large'' subsets, which may be of relevance in a variety of
issues. To support this claim, we use it to characterize the common points
of continuity, or approximate continuity, of two quasiconvex functions that
coincide away from a set of measure zero.
\end{abstract}

\section{Introduction\label{intro}}

To begin with matters of terminology, a quasiconvex function $f$ on $\Bbb{R}%
^{N}$ refers to an extended real-valued function whose lower level sets $%
\{x\in X:f(x)<\alpha \}$ are convex for every $\alpha \in \Bbb{R}.$ The same
class is obtained if the level sets $\{x\in X:f(x)\leq \alpha \}$ are used
instead. These functions were first introduced by\footnote{%
The occasional claim that they were already investigated by von Neumann in
1928 is a gross exaggeration; see the historical article \cite{GuMo04}.} de
Finetti \cite{Fi49} in 1949, although the nomenclature was only coined by
Fenchel \cite{Fe53} a few years later.

A \emph{null set} is a subset of $\Bbb{R}^{N}$ of Lebesgue measure $0$ and
Lebesgue measure, simply called measure, is denoted by $\mu _{N}.$ Without
accompanying epithet, the words ``open'', ``interior'', ``closure'',
``boundary'', etc. and related symbols always refer to the euclidean
topology of $\Bbb{R}^{N}.$

Recall also that the density topology on $\Bbb{R}^{N}$ is the topology whose
open subsets are $\emptyset $ and the measurable subsets of $\Bbb{R}^{N}$
with density $1$ at each point. They will henceforth be referred to as \emph{%
density }open. Every open subset is density open. The (extended) real-valued
functions on $\Bbb{R}^{N}$ which are (semi)continuous when $\Bbb{R}^{N}$ is
equipped with the density topology and $\Bbb{R}$ with the euclidean topology
are the so-called \emph{approximately} (semi)continuous functions.

We shall only use elementary properties of the density topology. For
convenience, a brief summary is given in the next section.

If $f:\Bbb{R}^{N}\rightarrow \overline{\Bbb{R}}:=[-\infty ,\infty ]$ and $%
\alpha \in \Bbb{R},$ we set 
\begin{equation}
F_{\alpha }:=\{x\in \Bbb{R}^{N}:f(x)<\alpha \}.  \label{1}
\end{equation}
This will only be used without further mention when the function of interest
is called $f,$ so no ambiguity will arise.

Now, if $f$ is upper semicontinuous (usc for short) and $U\subset \Bbb{R}
^{N} $ is an open subset, it is trivial that 
\begin{equation}
\inf_{U}f=\limfunc{ess}\inf_{U}f.  \label{2}
\end{equation}
Indeed, since the lower level sets $F_{\alpha }$ are open, the intersection $%
U\cap F_{\alpha }$ has positive measure whenever it is nonempty. More
generally, (\ref{2}) is true and equally straightforward if $U$ is density
open and $f$ is approximately usc, but it fails if $U$ has only positive
measure, even if $f$ is finite and continuous or has any amount of extra
regularity.

Thus, heuristically at least, (\ref{2}) for every density open subset $%
U\subset \Bbb{R}^{N}$ is best possible for any measurable function $f.$ This
property, which ensures that $\inf_{U}f$ can be arbitrarily closely
approximated by values of $f$ achieved on ``large''subsets, is of possible
relevance in a variety of technical issues. It may fail to hold if the
function is modified at a single point, but elementary one-dimensional
examples show that it is more general than upper semicontinuity, even
approximate.

Likewise, if $f$ is approximately lower semicontinuous, then 
\begin{equation}
\sup_{U}f=\limfunc{ess}\sup_{U}f,  \label{3}
\end{equation}
for every density open subset $U$ of $\Bbb{R}^{N}$

The main result of this note (Theorem \ref{th3}) is that if $f$ is
quasiconvex, (\ref{3}) always holds and (\ref{2}) holds if and only if it
holds when $U=\Bbb{R}^{N}$ (Theorem \ref{th3}). Of course, (\ref{2}) and (%
\ref{3}) are trivial when $f$ is approximately continuous (in particular,
when $U$ is open and $f$ is continuous), but it is more surprising that they
continue to hold when $f$ is quasiconvex, without any continuity-like
requirement. (Needless to say, quasiconvexity does not imply approximate
continuity.) When $f$ is an arbitrary convex function -not necessarily
proper- an equivalent statement is given in Corollary \ref{cor4}.

In spite of the by now substantial literature involving quasiconvex
functions, this arguably notable property seems to have remained unnoticed,
even when $U$ is an euclidean open subset. At any rate, prior connections
between quasiconvexity in the sense of de Finetti and the density topology
(or approximate continuity) appear to be inexistent.

In Section \ref{equivalent}, we use (\ref{2}) and (\ref{3}) to compare the
points of (approximate) continuity of two real-valued quasiconvex functions $%
f$ and $g$ on $\Bbb{R}^{N}$ such that $f=g$ a.e., so that $f$ and $g$ have
the same essential infimum $m:=\limfunc{ess}\inf_{\Bbb{R}^{N}}f=$ $\limfunc{
ess}\inf_{\Bbb{R}^{N}}g.$

By a well known result of Crouzeix \cite{Cr81} (see also \cite{ChCr87}),
every real-valued quasiconvex function is Fr\'{e}chet differentiable a.e.
and so continuous a.e. A sharper property is even proved in Borwein and Wang 
\cite{BoWa05} in the lsc case. Thus, $f$ and $g$ above are simultaneously
continuous at the points of a large set, but this does not say whether $f$
is continuous at a given point $x$ where $g$ is known to be continuous.

In Theorem \ref{th6}, we show that this question and the same question for
points of approximate continuity can be given simple, yet complete answers:
A point $x$ of approximate continuity of $g$ is not a point of approximate
continuity of $f$ if and only if $m>-\infty ,$ $g(x)=m$ and $x\in F_{m}$
(see (\ref{1})) while a point $x$ of continuity of $g$ is not a point of
continuity of $f$ if and only if $m>-\infty ,g(x)=m$ and $x\in \cup _{\alpha
<m}\overline{F}_{\alpha }.$ The similarity and the difference between these
two results are better appreciated if it is noticed that $F_{m}=\cup
_{\alpha <m}F_{\alpha }.$

\section{Background\label{background}}

We begin with a brief review of the few properties of the density topology
on $\Bbb{R}^{N}$ and related topics that will be used in this paper. Further
information, notably the proof that the density topology \emph{is} a
topology, can be found in \cite{GoNeNi61} or \cite{LuMaZa86}. It was
introduced in 1952 by Haupt and Pauc \cite{HaPa52} in a more general
setting, but many other expositions are limited to $N=1.$ For classical
generalizations, see \cite{Ma64}, \cite{TrZi63}.

First, recall that while the density of a set at a point $x$ is often
defined by using shrinking families of open cubes centered at $x,$ an
equivalent definition is obtained if cubes are replaced with euclidean
balls. This is elementary but still requires a short argument; see for
instance \cite[p. 460]{Jo93}. While not a major point, this remark is
convenient.

From the very definition of a density open subset, it follows that the
density interior of a \emph{measurable} subset $S\subset \Bbb{R}^{N}$ is the
subset $S_{1}$ of $S$ of those points at which $S$ has density $1.$ By the
Lebesgue density theorem, $S\backslash S_{1}$ is a null set. Thus, a null
set has empty density interior and, conversely, a measurable set with empty
density interior is a null set. (This converse is of course false with the
euclidean topology.) In particular, a nonempty density open subset always
has positive measure.

Every subset of $\Bbb{R}^{N},$ measurable or not, has a density interior,
but a non-measurable subset with empty density interior is obviously not a
null set. Such sets will never be involved in the sequel. Although we shall
not use this here, we feel compelled to point out that every null set is
density closed (and even discrete) because its complement is clearly density
open.

A measurable subset $W\subset \Bbb{R}^{N}$ is a density neighborhood of a
point $x$ if and only if it contains a density open neighborhood of $x.$
From the above, this happens if and only if $W$ has density $1$ at $x$ and
then $W$ has positive measure. Thus, the inverse image $f^{-1}(V)$ of an
open subset $V\subset \Bbb{R}$ under a \emph{measurable} function $f$ is a
density neighborhood of some point $x$ if and only if $f^{-1}(V)$ has
density $1$ at $x.$

In the Introduction, a function $f:\Bbb{R}^{N}\rightarrow \Bbb{R}$ was
called approximately continuous if it is continuous when $\Bbb{R}^{N}$ is
equipped with the density topology and $\Bbb{R}$ with the euclidean
topology. A different definition is that every $x\in \Bbb{R}^{N}$ is
contained in a measurable set $E_{x}$ having density $1$ at $x$ such that $%
f_{|E_{x}}$ is continuous at $x$ (for the euclidean topology). It is well
known and not hard to prove, though not entirely trivial, that the two
definitions are equivalent.

Aside from the density topology and approximately continuous functions, we
shall also use several properties of convex subsets of $\Bbb{R}^{N},$ some
of which, but not all, are explicitly spelled out in standard texts. A basic
fact is that if a convex subset $C\subset \Bbb{R}^{N}$ has empty interior,
it is contained in an affine hyperplane (\cite{R070}). Then, elementary
considerations yield the following: For every convex subset $C\subset \Bbb{R}%
^{N}$ the statements (i) $C$ has empty interior, (ii) $C$ is a null set
(iii) $\overline{C}$ is a null set and (iv) $\overline{C}$ has empty
interior, are all equivalent.

Another useful property is that if $C\subset \Bbb{R}^{N}$ is closed and
convex, at least one supporting hyperplane passes through each point of its
boundary $\partial C.$ Furthermore, every convex subset $C\subset \Bbb{R}
^{N} $ is measurable because $C$ is the union of its interior $\overset{
\circ }{C} $ with a subset of $\partial C,$ and $\partial C$ is always a
null set.

The last statement is one of those folklore results routinely used without
being linked to a reference, or linked to one with a technical proof. There
must certainly be counter examples to this statement, but supporting
evidence is far more common. So, here is a quick proof for convenience and
completeness. Since $\partial C$ is closed, it is measurable. In the only
nontrivial case when $C$ has nonempty interior, every open ball $B(x,r)$
centered at $x\in \partial C$ is split into two open halves by a hyperplane
supporting $\overline{C}$ at $x,$ so that one of them does not intersect $%
\overline{C}.$ As a result, the density of $\partial C$ at $x$ cannot exceed 
$\frac{1}{2},$ so that $\partial C$ is a null set by the Lebesgue density
theorem (accordingly, the density of $\partial C$ at any of its points is
actually $0$). Incidentally, the ball-splitting argument will soon be used
again for other purposes.

Notice that the measurability of convex sets implies at once that all
quasiconvex functions are measurable.

\section{Main result\label{main}}

We need two preliminary lemmas.

\begin{lemma}
\label{lm1}Let $C\subset \Bbb{R}^{N}$ be convex and $U\subset \Bbb{R}^{N}$
be density open.\newline
(i) If $C$ has nonempty interior and $U\cap C\neq \emptyset ,$ then $\mu
_{N}(U\cap C)>0.$\newline
(ii) If $U\cap (\Bbb{R}^{N}\backslash C)\neq \emptyset ,$ then $\mu
_{N}(U\cap (\Bbb{R}^{N}\backslash C))>0.$
\end{lemma}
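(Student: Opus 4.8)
The plan is to prove both statements by reducing them to the basic fact recalled in Section~\ref{background}: a nonempty density open set has positive measure, and more generally a measurable set with empty density interior is a null set. For part (i), suppose $C$ has nonempty interior and pick $x\in U\cap C$. I would separate two cases according to whether $x$ lies in the interior $\overset{\circ}{C}$ or on the boundary $\partial C$. If $x\in\overset{\circ}{C}$, then $U\cap\overset{\circ}{C}$ is density open (intersection of a density open set with a euclidean open, hence density open, set) and nonempty, so it has positive measure and a fortiori $\mu_N(U\cap C)>0$. If $x\in\partial C$, I would use the ball-splitting argument already invoked in the background section: a supporting hyperplane of $\overline C$ at $x$ cuts any ball $B(x,r)$ into two open halves, one of which is disjoint from $\overline C$; consequently $\overline C$, hence $C$, has density at most $\tfrac12$ at $x$. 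But $U$ has density $1$ at $x$, so $U\cap C$ has positive density at $x$ (the densities of $U$ and of $C$ at $x$ cannot both "use up" the whole ball), and therefore $U\cap C$ is not a null set. One must be a little careful here: $C$ need not be measurable a priori at $x$, but every convex set is measurable (again recalled in the background), so the lower density of $C$ at $x$ being positive does force $\mu_N(U\cap C)>0$ via $\mu_N(U\cap C)\ge \mu_N(U\cap\overline C)-\mu_N(\partial C)$ and the density estimate, since $\partial C$ is null.

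For part (ii), the key point is that $\Bbb{R}^N\backslash C$ contains a euclidean-open set dense near any of its points that is not "too thin" — more precisely, I would argue that $\Bbb{R}^N\backslash C$ has density $1$ at each of its points that do not lie in $\overline C$, and density at least $\tfrac12$ at each of its points lying in $\partial C$ (by the same supporting-hyperplane/ball-splitting argument, now applied to the complementary half-ball). Since any point $x\in U\cap(\Bbb{R}^N\backslash C)$ satisfies $x\notin C$, either $x\notin\overline C$, in which case a whole euclidean ball around $x$ misses $C$ and $U$ meets that ball in positive measure, or $x\in\partial C$, in which case the density of $\Bbb{R}^N\backslash C$ at $x$ is at least $\tfrac12$ while the density of $U$ at $x$ is $1$, forcing $U\cap(\Bbb{R}^N\backslash C)$ to have positive density at $x$ and hence positive measure. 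In either case $\mu_N\big(U\cap(\Bbb{R}^N\backslash C)\big)>0$, as claimed.

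The main obstacle is the bookkeeping in the boundary case: one must combine the density-$1$ property of $U$ at $x$ with the density bound for $C$ (or its complement) at $x$ to conclude that the intersection has \emph{positive} lower density, and then invoke measurability of convex sets to turn positive lower density into positive measure. The cleanest way to phrase this is: if $A$ has density $1$ at $x$ and $B$ is measurable with $\limsup_{r\to0}\mu_N(B\cap B(x,r))/\mu_N(B(x,r))\ge\delta>0$, then $\mu_N(A\cap B)>0$, because $\mu_N(A\cap B\cap B(x,r))\ge \mu_N(B\cap B(x,r))-\mu_N(B(x,r)\backslash A)$ and the subtracted term is $o(\mu_N(B(x,r)))$. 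Applying this with $B=C$ in part (i) and $B=\Bbb{R}^N\backslash C$ in part (ii), using $\delta=\tfrac12$ from the ball-splitting estimate (and $\delta=1$ when $x$ is in the interior or exterior respectively), finishes both parts. Everything else is routine once this observation is isolated.
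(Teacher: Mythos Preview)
Your treatment of part~(ii) is correct and essentially coincides with the paper's: the supporting-hyperplane argument shows that $\Bbb{R}^{N}\backslash C$ occupies at least half of every small ball around $x\in\partial C$, and since $U$ has density~$1$ at~$x$ the intersection must have positive measure. Your general principle --- if $A$ has density~$1$ at~$x$ and $B$ is measurable with $\limsup_{r\to 0}\mu_N(B\cap B(x,r))/\mu_N(B(x,r))\ge\delta>0$, then $\mu_N(A\cap B)>0$ --- is also correct and is exactly the mechanism behind both parts.

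The gap is in part~(i), in the boundary case $x\in C\cap\partial C$. The ball-splitting argument yields that one open half of $B(x,r)$ is \emph{disjoint} from $\overline{C}$; this gives an \emph{upper} bound $\tfrac12$ on the density of $C$ at~$x$, not a lower bound. Your sentence ``$C$ has density at most $\tfrac12$ at $x$, but $U$ has density~$1$, so $U\cap C$ has positive density'' does not follow: an upper bound on the density of~$C$ cannot, combined with density~$1$ of~$U$, produce any information on $\mu_N(U\cap C)$. (Indeed, the density of a convex set at a boundary point can be strictly less than $\tfrac12$; take the planar cone $\{y\ge|x|\}$ at the origin.) To feed your general principle you need a \emph{lower} bound $\delta>0$ on the density of~$C$ at~$x$. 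That lower bound does exist, but it comes from a different construction: since $\overset{\circ}{C}\neq\emptyset$, pick an open ball $B\subset\overset{\circ}{C}$ not containing~$x$ and observe that the open cone $K=\bigcup_{\lambda\in(0,1)}\bigl(\lambda B+(1-\lambda)x\bigr)$ lies in~$C$ and has a fixed positive density $\kappa$ at its apex~$x$. This is precisely what the paper does (without splitting into cases), and once you have that $\kappa$, your principle with $\delta=\kappa$ finishes the job. So your framework is sound, but the source of the lower density bound in~(i) must be the cone through an interior ball, not the supporting hyperplane.
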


\begin{proof}
(i) Choose $x_{0}\in U\cap C$ along with an open ball $B\subset C$ such that 
$x_{0}\notin B.$ The hypothesis $\overset{\circ }{C}\neq \emptyset $ ensures
that $B$ exists. Indeed, choose $B\subset \overset{\circ }{C}.$ If $x_{0}$
is not the center of $B,$ shrink the radius of $B$ until $x\notin B.$ If $%
x_{0}$ is the center of $B,$ just replace $B$ by an open ball contained in $%
B $ that does not contain the center $x_{0}.$

The set $K:=\cup _{\lambda \in (0,1)}\lambda B+(1-\lambda )x_{0}$ is an open
convex cone with apex at $x_{0}$ (and spherical ``end'') contained in $%
\overset{\circ }{C}.$ Let $B(x_{0},r)$ denote the open ball with center $%
x_{0}$ and radius $r>0.$ Clearly, the ratio $\kappa :=\frac{\mu _{N}(K\cap
B(x_{0},r))}{\mu _{N}(B(x_{0},r))}\in (0,1)$ is independent of $r>0$ small
enough. On the other hand, since $U$ has density $1$ at $x_{0},$ then $\frac{%
\mu _{N}(U\cap B(x_{0},r))}{\mu _{N}(B(x_{0},r))}>1-\kappa $ if $r>0$ is
small enough. This implies $\mu _{N}(U\cap K\cap B(x_{0},r))>0,$ for
otherwise the intersection of $U\cap B(x_{0},r)$ and $K\cap B(x_{0},r)$
(that is, $U\cap K\cap B(x_{0},r)$) is a null set, so that $\mu _{N}((U\cup
K)\cap B(x_{0},r))=\mu _{N}(U\cap B(x_{0},r))+\mu _{N}(K\cap B(x_{0},r))>\mu
_{N}(B(x_{0},r)),$ which is absurd. Since $K\cap B(x_{0},r)\subset K\subset
C,$ it follows that $\mu _{N}(U\cap C)>0.$

(ii) Choose $x_{0}\in U\cap (\Bbb{R}^{N}\backslash C).$ We claim that $\Bbb{R%
}^{N}\backslash C$ contains (at least) half of any open ball centered at $
x_{0}$ with small enough radius. Since this is obvious if $x_{0}$ lies in
the interior of $\Bbb{R}^{N}\backslash C,$ we assume that $x_{0}\in \partial
(\Bbb{R}^{N}\backslash C)=\partial C.$ There is at least one affine
hyperplane $H$ supporting $\overline{C}$ at $x_{0}.$ Therefore, $H$ splits
every open ball $B(x_{0},r)$ into two open halves, one of which does not
intersect $\overline{C}$ and is therefore contained in $\Bbb{R}
^{N}\backslash C.$

Since $U$ has density $1$ at $x_{0},$ it follows that $\mu _{N}(U\cap
B(x_{0},r))>\frac{1}{2}\mu _{N}(B(x_{0},r))$ if $r>0$ is small enough. From
the above, half of $B(x_{0},r)$ is contained in $\Bbb{R}^{N}\backslash C$
and the other half cannot contain a set of measure greater than $\frac{1}{2}%
\mu _{N}(B(x_{0},r)).$ As a result, the half-ball contained in $\Bbb{R}%
^{N}\backslash C$ must intersect $U$ along a set of positive measure, so
that $\mu _{N}(U\cap (\Bbb{R}^{N}\backslash C))>0.$
\end{proof}

\begin{lemma}
\label{lm2}If $f:\Bbb{R}^{N}\rightarrow \Bbb{R}$ is measurable, the
following statements are equivalent:\newline
(i) $\sup_{U}f=\limfunc{ess}\sup_{U}f$ for every density open subset $%
U\subset \Bbb{R}^{N}.$ \newline
(ii) For every $x_{0}\in f^{-1}(\Bbb{R}),$ every density open subset $%
U\subset \Bbb{R}^{N}$ containing $x_{0}$ and every $\varepsilon >0,$%
\begin{equation*}
\mu _{N}(\{x\in U:f(x)\geq f(x_{0})-\varepsilon \})>0.
\end{equation*}
Likewise, the following statements are equivalent: \newline
(i') $\inf_{U}f=\limfunc{ess}\inf_{U}f$ for every density open subset $%
U\subset \Bbb{R}^{N}.$ \newline
(ii') For every $x_{0}\in f^{-1}(\Bbb{R}),$ every density open subset $%
U\subset \Bbb{R}^{N}$ containing $x_{0}$ and every $\varepsilon >0,$%
\begin{equation*}
\mu _{N}(\{x\in U:f(x)<f(x_{0})+\varepsilon \})>0.
\end{equation*}
\end{lemma}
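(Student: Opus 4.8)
The plan is to establish both equivalences by unwinding the definitions of $\limfunc{ess}\sup$ and $\limfunc{ess}\inf$, after first discarding the automatic inequalities. For every density open $U$ one always has $\limfunc{ess}\sup_{U}f\leq \sup_{U}f$ (trivial if $\sup_{U}f=+\infty ,$ and otherwise immediate since $f\leq \sup_{U}f$ on $U,$ hence a.e. on $U$) and, symmetrically, $\limfunc{ess}\inf_{U}f\geq \inf_{U}f.$ Thus (i) is equivalent to the single inequality $\sup_{U}f\leq \limfunc{ess}\sup_{U}f$ holding for every density open $U,$ and (i') to $\limfunc{ess}\inf_{U}f\leq \inf_{U}f$ for every density open $U.$ For $U=\emptyset $ the equalities in (i), (i') hold trivially and (ii), (ii') are vacuous, so only \emph{nonempty} density open sets need to be considered; recall from \secref{background} that these have positive measure.

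Next, fix a nonempty density open $U.$ By the definition of the supremum, $\sup_{U}f\leq \limfunc{ess}\sup_{U}f$ is the same as $f(x_{0})\leq \limfunc{ess}\sup_{U}f$ for every $x_{0}\in U,$ and this holds if and only if $f(x_{0})-\varepsilon <\limfunc{ess}\sup_{U}f$ for every $x_{0}\in U$ and every $\varepsilon >0$ (the nontrivial direction follows by letting $\varepsilon \downarrow 0$). Since $f$ is measurable, the set $\{x\in U:f(x)>f(x_{0})-\varepsilon \}$ is measurable, and by the standard fact that $f\leq \limfunc{ess}\sup_{U}f$ a.e. on $U,$ the inequality $f(x_{0})-\varepsilon <\limfunc{ess}\sup_{U}f$ is equivalent to $\mu _{N}(\{x\in U:f(x)>f(x_{0})-\varepsilon \})>0.$ Finally, once quantified over all $\varepsilon >0,$ this is equivalent to the condition appearing in (ii): ``$>$'' trivially gives ``$\geq $'', while conversely $\mu _{N}(\{x\in U:f(x)\geq f(x_{0})-\varepsilon /2\})>0$ forces $\mu _{N}(\{x\in U:f(x)>f(x_{0})-\varepsilon \})>0$ because $\{f\geq f(x_{0})-\varepsilon /2\}\subset \{f>f(x_{0})-\varepsilon \}.$ Running $U$ over all nonempty density open subsets --- equivalently, over all density open $U$ together with all $x_{0}\in U$ --- gives (i) $\Leftrightarrow $ (ii).

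For (i') $\Leftrightarrow $ (ii') I would apply the equivalence just proved to $-f,$ using $\limfunc{ess}\inf_{U}f=-\limfunc{ess}\sup_{U}(-f),$ $\inf_{U}f=-\sup_{U}(-f)$ and $\{x\in U:-f(x)\geq -f(x_{0})-\varepsilon \}=\{x\in U:f(x)\leq f(x_{0})+\varepsilon \};$ the same $\varepsilon $-adjustment, now turning ``$\leq $'' into ``$<$'', converts the output into precisely (ii'). (Alternatively, one simply repeats the previous paragraph with all inequalities reversed.) I do not anticipate a genuine obstacle here: the whole argument is bookkeeping around the definitions, and the only steps worth a moment's care are the passages between strict and non-strict inequalities --- handled by the $\varepsilon $-quantifier above --- and the check that the degenerate cases (such as $U=\emptyset $ or $\limfunc{ess}\sup_{U}f=+\infty $) do not break the chain of equivalences.
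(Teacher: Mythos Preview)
Your proof is correct and takes essentially the same route as the paper: both arguments rest on the characterization of $\limfunc{ess}\sup_{U}f$ via $\mu_{N}(\{x\in U:f(x)>c\})>0\Leftrightarrow c<\limfunc{ess}\sup_{U}f$, and both reduce (i')$\Leftrightarrow$(ii') to (i)$\Leftrightarrow$(ii) by passing to $-f$ with the attendant strict/non-strict adjustment. The only cosmetic difference is that the paper phrases each implication as a short contradiction, whereas you unwind the definitions directly and are more explicit about the degenerate cases.
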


\begin{proof}
(i) $\Rightarrow $ (ii) Suppose that (i) holds and, by contradiction, assume
that there are $x_{0}\in f^{-1}(\Bbb{R}),$ a density open subset $U\subset 
\Bbb{R}^{N}$ and some $\varepsilon >0$ such that $\mu _{N}(\{x\in U:f(x)\geq
f(x_{0})-\varepsilon \})=0.$ Then, $\limfunc{ess}\sup_{U}f\leq
f(x_{0})-\varepsilon <f(x_{0})\leq \sup_{U}f,$ which contradicts (i).

(ii) $\Rightarrow $ (i) Let $U\subset $ $\Bbb{R}^{N}$ be a density open
subset. We argue by contradiction, thereby assuming that $\sup_{U}f>\limfunc{
ess}\sup_{U}f$ . If so, $U$ is not empty (otherwise, both suprema are $%
-\infty $ ) and $\limfunc{ess}\sup_{U}f<\infty .$ Thus, the assumption $%
\sup_{U}f>\limfunc{ess}\sup_{U}f$ implies the existence of $x_{0}\in f^{-1}(%
\Bbb{R})$ such that $\limfunc{ess}\sup_{U}f<f(x_{0})\leq \sup_{U}f.$ Choose $%
\varepsilon >0$ small enough that $\limfunc{ess}\sup_{U}f<f(x_{0})-%
\varepsilon .$ By (ii), $\mu _{N}(\{x\in U:f(x)\geq f(x_{0})-\varepsilon
\})>0,$ so that $\limfunc{ess}\sup_{U}f\geq f(x_{0})-\varepsilon ,$ which is
a contradiction.

That (i') $\Leftrightarrow $ (ii') follows by replacing $f$ by $-f$ above,
after noticing that the equivalence between (i) and (ii) remains true if the
inequality in $\{x\in U:f(x)\geq f(x_{0})-\varepsilon \}$ is replaced by the
corresponding strict inequality.
\end{proof}

We now prove the main result announced in the Introduction.

\begin{theorem}
\label{th3}Let $f:\Bbb{R}^{N}\rightarrow \overline{\Bbb{R}}$ be quasiconvex.%
\newline
(i) $\sup_{U}f=\limfunc{ess}\sup_{U}f$ for every density open subset $%
U\subset \Bbb{R}^{N}.$\newline
(ii) $\inf_{U}f=\limfunc{ess}\inf_{U}f$ for every density open subset $%
U\subset \Bbb{R}^{N}$ if and only if this is true when $U=\Bbb{R}^{N}.$
\end{theorem}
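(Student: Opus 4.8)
The plan is to derive both statements from Lemmas \ref{lm1} and \ref{lm2}, the only genuinely new ingredient being that quasiconvexity makes every lower level set $F_{\alpha}$ convex. Since \lemref{lm2} is stated for real-valued functions, I would first reduce to that case: pick an increasing homeomorphism $\varphi $ of $\overline{\Bbb{R}}$ onto a bounded interval (for instance $\varphi =\arctan ),$ and replace $f$ by $g:=\varphi \circ f.$ For $c\in \Bbb{R}$ the level set $\{g<c\}$ is either $\emptyset ,$ or $\Bbb{R}^{N},$ or $F_{\varphi ^{-1}(c)},$ or $\cup _{\alpha \in \Bbb{R}}F_{\alpha },$ so $g$ is again quasiconvex; and because $\varphi $ is continuous and strictly increasing, $\sup ,$ $\limfunc{ess}\sup ,$ $\inf $ and $\limfunc{ess}\inf $ over any subset all commute with $\varphi .$ Hence each equality in (i) and (ii), as well as the hypothesis of (ii), holds for $f$ if and only if it holds for $g,$ and we may assume $f:\Bbb{R}^{N}\rightarrow \Bbb{R}.$ (Alternatively, this reduction can be bypassed by disposing of the cases $\sup _{U}f\in \{\pm \infty \}$ and $\inf _{U}f\in \{\pm \infty \}$ separately, which is straightforward.)

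For part (i), I would verify condition (ii) of \lemref{lm2}. Given $x_{0}\in \Bbb{R}^{N},$ a density open $U$ containing $x_{0}$ and $\varepsilon >0,$ set $\alpha :=f(x_{0})-\varepsilon .$ Then $\{x\in U:f(x)\geq \alpha \}=U\cap (\Bbb{R}^{N}\backslash F_{\alpha }),$ and this set contains $x_{0}$ because $f(x_{0})>\alpha ;$ in particular it is nonempty. Since $F_{\alpha }$ is convex, \lemref{lm1}(ii) gives $\mu _{N}(U\cap (\Bbb{R}^{N}\backslash F_{\alpha }))>0,$ which is exactly condition (ii) of \lemref{lm2}. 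Thus condition (i) of \lemref{lm2}, that is, statement (i) of the theorem, holds. Note that this argument uses no hypothesis on $F_{\alpha }$ beyond convexity, and in particular \lemref{lm1}(ii) needs no interior assumption; this is why (i) is unconditional.

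For part (ii), the ``only if'' direction is immediate because $\Bbb{R}^{N}$ is itself density open. For the converse, suppose $\inf _{\Bbb{R}^{N}}f=\limfunc{ess}\inf _{\Bbb{R}^{N}}f=:m,$ and verify condition (ii') of \lemref{lm2}. Given $x_{0},$ $U$ and $\varepsilon $ as above, set $\beta :=f(x_{0})+\varepsilon ;$ then $\{x\in U:f(x)<\beta \}=U\cap F_{\beta },$ which contains $x_{0}$ and is therefore nonempty, and $F_{\beta }$ is convex. The crucial point is that, since $f(x_{0})\geq \inf _{\Bbb{R}^{N}}f=m,$ we have $\beta >m=\limfunc{ess}\inf _{\Bbb{R}^{N}}f,$ so $\mu _{N}(F_{\beta })=\mu _{N}(\{f<\beta \})>0;$ a convex set of positive measure has nonempty interior, so \lemref{lm1}(i) now applies and yields $\mu _{N}(U\cap F_{\beta })>0.$ This is condition (ii') of \lemref{lm2}, hence (i'), that is, statement (ii) of the theorem.

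I do not anticipate a serious obstacle: the proof is essentially an exercise in matching the two lemmas to the level sets $F_{\alpha }.$ The one place deserving attention is the asymmetry between \lemref{lm1}(i) and \lemref{lm1}(ii): the supremum statement comes for free because the complement of a convex set meets a density open set in positive measure with no interior hypothesis, whereas the infimum statement genuinely needs $F_{\beta }$ to have nonempty interior, and it is precisely the global hypothesis $\inf _{\Bbb{R}^{N}}f=\limfunc{ess}\inf _{\Bbb{R}^{N}}f$ that upgrades ``$F_{\beta }\neq \emptyset $'' to ``$\mu _{N}(F_{\beta })>0$.'' This also makes transparent why that hypothesis cannot be removed.
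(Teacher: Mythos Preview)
Your proof is correct and follows essentially the same route as the paper's: reduce to real-valued $f$ via $\arctan$, then feed the convex level sets $F_{\alpha}$ into Lemma~\ref{lm1} to verify conditions (ii) and (ii') of Lemma~\ref{lm2}. Your added remarks on the asymmetry between parts (i) and (ii) of Lemma~\ref{lm1} are apt and make explicit what the paper leaves implicit.
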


\begin{proof}
The extended real-valued case can be deduced from the real-valued one by
changing $f$ into $\arctan f.$ This does not affect quasiconvexity and it is
easily checked that $\arctan $ commutes with $\limfunc{ess}\sup_{U}$ and $%
\limfunc{ess}\inf_{U}.$ Accordingly, in the remainder of the proof, $f$ is
real-valued.

(i) We show that the condition (ii) of Lemma \ref{lm2} holds and use the
equivalence with (i) of that lemma.

Pick $x_{0}\in f^{-1}(\Bbb{R}),$ a density open subset $U\subset \Bbb{R}^{N}$
containing $x_{0}$ and $\varepsilon >0.$ The set $\{x\in U:f(x)\geq
f(x_{0})-\varepsilon \}$ is the intersection $U\cap (\Bbb{R}^{N}\backslash
F_{f(x_{0})-\varepsilon })$ (see (\ref{1})). Since $U\cap (\Bbb{R}
^{N}\backslash F_{f(x_{0})-\varepsilon })\neq \emptyset $ (it contains $%
x_{0} $), it follows from part (ii) of Lemma \ref{lm1} that $\mu _{N}(U\cap (%
\Bbb{R}^{N}\backslash F_{f(x_{0})-\varepsilon }))>0.$

(ii) It is obvious that $\inf_{\Bbb{R}^{N}}f=\limfunc{ess}\inf_{\Bbb{R}%
^{N}}f $ is necessary. Conversely, assuming this, we show that the condition
(ii') of Lemma \ref{lm2} holds and use the equivalence with (i') of that
lemma.

Pick $x_{0}\in f^{-1}(\Bbb{R}),$ a density open subset $U\subset \Bbb{R}^{N}$
containing $x_{0}$ and $\varepsilon >0.$ The set $\{x\in
U:f(x)<f(x_{0})+\varepsilon \}$ is the intersection $U\cap
F_{f(x_{0})+\varepsilon }.$ Since $\limfunc{ess}\inf_{\Bbb{R}^{N}}f=\inf_{%
\Bbb{R}^{N}}f\leq f(x_{0})<f(x_{0})+\varepsilon ,$ the set $%
F_{f(x_{0})+\varepsilon }$ has positive measure and hence nonempty interior
since it is convex. Therefore, $\mu _{N}(U\cap F_{f(x_{0})+\varepsilon })>0$
by part (i) of Lemma \ref{lm1}.
\end{proof}

For convex functions (defined as functions with convex epigraphs and hence
not necessarily proper), Theorem \ref{th3} can be phrased differently.
Recall that the domain $\limfunc{dom}f$ of a convex function $f$ is the set
of points where $f<\infty .$ It includes the points where $f=-\infty ,$ if
any.

\begin{corollary}
\label{cor4}Let $f:\Bbb{R}^{N}\rightarrow [-\infty ,\infty ]$ be convex.
Then:\newline
(i) $\sup_{U}f=\limfunc{ess}\sup_{U}f$ for every density open subset $
U\subset \Bbb{R}^{N}.$\newline
(ii) $\inf_{U}f=\limfunc{ess}\inf_{U}f$ for every density open subset $%
U\subset \Bbb{R}^{N}$ if and only if either $f=\infty $ everywhere, or $%
\limfunc{dom}f$ has nonempty interior.
\end{corollary}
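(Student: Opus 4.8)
The plan is to deduce this from Theorem \ref{th3} by recognizing that a convex $f:\Bbb{R}^N\to[-\infty,\infty]$ is in particular quasiconvex, so part (i) is immediate and part (ii) reduces to checking \emph{when} the equality $\inf_{\Bbb{R}^N}f=\limfunc{ess}\inf_{\Bbb{R}^N}f$ holds. First I would dispose of the trivial case $f\equiv\infty$: then both sides equal $\infty$ and every density open $U$ (including $U=\Bbb{R}^N$) satisfies (ii) vacuously, matching the stated alternative. So assume $\limfunc{dom}f\neq\emptyset$.

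Next I would split according to whether $\limfunc{dom}f$ has nonempty interior. If $\limfunc{dom}f$ has empty interior, then by the background facts recalled in \secref{background} the convex set $\limfunc{dom}f$ is a null set, hence $f=\infty$ a.e., so $\limfunc{ess}\inf_{\Bbb{R}^N}f=\infty$; but $f$ takes a finite value (or $-\infty$) somewhere on $\limfunc{dom}f$, so $\inf_{\Bbb{R}^N}f<\infty$. Thus the equality fails for $U=\Bbb{R}^N$, and by Theorem \ref{th3}(ii) it fails for some density open $U$ as well — consistent with the corollary excluding this case. Conversely, suppose $\limfunc{dom}f$ has nonempty interior; I must show $\inf_{\Bbb{R}^N}f=\limfunc{ess}\inf_{\Bbb{R}^N}f$, after which Theorem \ref{th3}(ii) upgrades this to all density open $U$. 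The point is that on the open convex set $C:=\overset{\circ}{\widehat{\raisebox{0pt}[0pt][0pt]{$\limfunc{dom}f$}}}$ a convex function is locally bounded above, hence (being convex) locally Lipschitz and in particular continuous; so for any $x_0\in C$ and any value $v>\inf_C f=\inf_{\Bbb{R}^N}f$ there is a value $f(x)<v$ attained at some $x\in C$, and by continuity $f<v$ on a whole euclidean ball around that $x$, which has positive measure. Hence $\limfunc{ess}\inf_{\Bbb{R}^N}f\le v$ for every such $v$, giving $\limfunc{ess}\inf_{\Bbb{R}^N}f\le\inf_{\Bbb{R}^N}f$; the reverse inequality is automatic. (If $\inf_{\Bbb{R}^N}f=-\infty$ one runs the same argument with $v\to-\infty$.)

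The one point requiring a little care — the main obstacle, such as it is — is the identification $\inf_{\Bbb{R}^N}f=\inf_{C}f$ and the continuity of $f$ on $C$: one needs that $\limfunc{dom}f$ and its interior $C$ have the same infimum of $f$, which follows because every point of $\limfunc{dom}f$ lies in the closure of $C$ and $f$ restricted to a segment from an interior point is convex hence upper semicontinuous from the interior side, so values of $f$ near $\partial C\cap\limfunc{dom}f$ are approached by values on $C$. With that in hand, the corollary is just the translation of Theorem \ref{th3}(ii) through the dichotomy ``$\limfunc{dom}f$ is a null set'' versus ``$\limfunc{dom}f$ has nonempty interior,'' plus the degenerate case $f\equiv\infty$.
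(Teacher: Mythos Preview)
Your approach matches the paper's: reduce to Theorem \ref{th3}(ii), dispose of $f\equiv\infty$ and the empty-interior case as you do, and for nonempty interior use continuity of $f$ on the interior of $\limfunc{dom}f$ together with the segment upper-semicontinuity argument at boundary points to obtain $\inf_{\Bbb{R}^N}f=\limfunc{ess}\inf_{\Bbb{R}^N}f$.

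There is one genuine omission. Your claim that $f$ is locally Lipschitz, hence continuous, on $\overset{\circ}{D}$ (with $D:=\limfunc{dom}f$) presupposes that $f$ is \emph{real-valued} there, which fails for improper convex functions; the parenthetical ``runs the same argument with $v\to-\infty$'' does not repair this, since the continuity you invoke is then unavailable. The paper handles this explicitly: if $f^{-1}(-\infty)$ has positive measure both infima are trivially $-\infty$; otherwise it shows $f^{-1}(-\infty)=\emptyset$ by picking $x\in f^{-1}(-\infty)$ and an open ball $B\subset \overset{\circ}{D}\setminus\overline{f^{-1}(-\infty)}$ with center $x_0$, so that convexity forces $f(\lambda x+(1-\lambda)x_0)=-\infty$ for all $\lambda\in(0,1]$ while for small $\lambda$ this point lies in $B$, a contradiction. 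Only then is $f$ proper and your continuity argument legitimate. Equivalently you could cite the standard fact that an improper convex function is $-\infty$ on the entire relative interior of its domain, which here gives $\limfunc{ess}\inf_{\Bbb{R}^N}f=-\infty$ at once since $\overset{\circ}{D}$ has positive measure.
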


\begin{proof}
Since there is no need to discuss the case when $f=\infty $ everywhere
(trivial convex function), we henceforth assume that $f$ is not trivial. By
Theorem \ref{th3}, it suffices to prove that $\inf_{\Bbb{R}^{N}}f=\limfunc{
ess}\inf_{\Bbb{R}^{N}}f$ if and only if $\limfunc{dom}f$ has nonempty
interior.

We begin with necessity: If $\inf_{\Bbb{R}^{N}}f=\limfunc{ess}\inf_{\Bbb{R}
^{N}}f$ and $\limfunc{dom}f\neq \emptyset ,$ then $\limfunc{dom}f$ has
nonempty interior, for otherwise $\limfunc{dom}f$ (convex) is a null set, so
that $\limfunc{ess}\inf_{\Bbb{R}^{N}}f=\infty $ while $\inf_{\Bbb{R}
^{N}}f<\infty $ since $f$ is not trivial.

The proof of sufficiency requires a little work. For convenience, we set $D:=%
\limfunc{dom}f$ and, from now on, assume $\overset{\circ }{D}\neq \emptyset
. $

That $\inf_{\Bbb{R}^{N}}f=\limfunc{ess}\inf_{\Bbb{R}^{N}}f=-\infty $ is
trivial if $f^{-1}(-\infty )$ is a (convex) set of positive measure. Thus,
it suffices to consider the case when $f^{-1}(-\infty )$ is a null set. If
so, $f^{-1}(-\infty )=\emptyset .$ To see this, assume by contradiction that 
$x\in f^{-1}(-\infty ).$ Since $f^{-1}(-\infty )$ is convex and a null set,
its closure $\overline{f^{-1}(-\infty )}$ is also a null set (Section \ref
{background}). Thus, $\overset{\circ }{D}\neq \emptyset $ ensures that $%
\overset{\circ }{D}\backslash \overline{f^{-1}(-\infty )}$ contains an open
ball $B.$ Call $x_{0}$ its center and note that $f(x_{0})\in \Bbb{R}.$

By the convexity of $f$ (\cite[p. 25]{R070}), $f(\lambda x+(1-\lambda
)x_{0})<\lambda \alpha +(1-\lambda )\beta $ for every $\lambda \in (0,1)$
and every $\alpha ,\beta \in \Bbb{R}$ such that $\alpha >f(x)$ and $\beta
>f(x_{0}).$ Since $f(x)=-\infty ,$ it follows that $f(\lambda x+(1-\lambda
)x_{0})=-\infty $ for every $\lambda \in (0,1].$ But if $\lambda >0$ is
small enough, then $\lambda x+(1-\lambda )y\in B,$ so that $f(\lambda
x+(1-\lambda )y)\in \Bbb{R}.$ This contradiction shows that $f^{-1}(-\infty
)=\emptyset $ and, hence, that $f$ is proper (i.e., finite on $D$).

Since $f=\infty $ outside $D$ and $D$ has positive measure, $\limfunc{ess}%
\inf_{\Bbb{R}^{N}}f=\limfunc{ess}\inf_{D}f.$ Also, it is plain that $\inf_{%
\Bbb{R}^{N}}f=\inf_{D}f.$ To complete the proof, it suffices to show that $%
f(x)\geq \limfunc{ess}\inf_{D}f$ for every $x\in D.$ This is obvious if $%
x\in \overset{\circ }{D}$ since a proper convex function is continuous on
the interior of its domain.

Let then $x\in D\cap \partial D.$ Given $y\in \overset{\circ }{D},$ the
segment $(x,y]$ is entirely contained in $\overset{\circ }{D}$ (\cite[p. 45]
{R070}). On the other hand, a (finite) convex function on an interval is
upper semicontinuous on the closure of that interval (\cite[p. 16]{HiLe96}).
Thus, $f(x)\geq \lim \sup_{z\rightarrow x,z\in (x,y]}f(z).$ As observed
earlier, $f(z)\geq \limfunc{ess}\inf_{D}f$ since $z\in \overset{\circ }{D},$
so that $f(x)\geq \limfunc{ess}\inf_{D}f.$
\end{proof}

\section{Common points of continuity of equivalent quasiconvex functions%
\label{equivalent}}

The equivalence referred to in the section head is equality a.e. A point of
(approximate) continuity of $f:\Bbb{R}^{N}\rightarrow \overline{\Bbb{R}}$ is
defined as a point $x\in f^{-1}(\Bbb{R})$ such that $f$ is (approximately)
continuous at $x.$ Such points are the points $x$ of (approximate)
continuity of $\arctan f$ such that $\arctan f(x)\neq \pm \frac{\pi }{2}.$
Thus, as in the proof of Theorem \ref{th3}, we may and will confine
attention to real-valued functions.

\begin{lemma}
\label{lm5}Let $f,g:\Bbb{R}^{N}\rightarrow \Bbb{R}$ be quasiconvex functions
such that $f=g$ a.e., so that $\limfunc{ess}\inf_{\Bbb{R}^{N}}f=\limfunc{ess}%
\inf_{\Bbb{R}^{N}}g:=m$ ($\geq -\infty $).\newline
(i) If also $\inf_{\Bbb{R}^{N}}f=m$ and $\inf_{\Bbb{R}^{N}}g=m$ (not a
restriction if $m=-\infty $), then $f$ and $g$ have the same points of
continuity, the same points of approximate continuity and achieve a common
value at such points. \newline
(ii) $\max \{f,m\}$ and $\max \{g,m\}$ have the same points of continuity,
the same points of approximate continuity and achieve a common value at such
points.
\end{lemma}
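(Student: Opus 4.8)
The plan is to exploit Theorem~\ref{th3} to extract equal essential suprema and infima of $f$ and $g$ over \emph{density open} neighborhoods, and then translate that into equality of pointwise values and into the statement that one function is (approximately) continuous at $x$ iff the other is. Concretely, for part~(i), fix a point $x$ and note that since $f=g$ a.e.\ we have $\limfunc{ess}\sup_V f=\limfunc{ess}\sup_V g$ and $\limfunc{ess}\inf_V f=\limfunc{ess}\inf_V g$ for \emph{every} measurable set $V$, in particular for every density open $V$. By Theorem~\ref{th3}(i), $\sup_V f=\limfunc{ess}\sup_V f$ and likewise for $g$, so $\sup_V f=\sup_V g$ for every density open $V$. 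By Theorem~\ref{th3}(ii), the extra hypothesis $\inf_{\Bbb{R}^N}f=m=\inf_{\Bbb{R}^N}g$ (automatic when $m=-\infty$, since both essential infima are then $-\infty$ and $f,g$ are everywhere finite) gives $\inf_V f=\limfunc{ess}\inf_V f$ and the same for $g$, hence $\inf_V f=\inf_V g$ for every density open $V$.

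Now I would run the density open neighborhoods $V$ shrinking down to $x$. Since $f^{-1}(\Bbb{R})=\Bbb{R}^N$ here, $\{x\}$ is contained in arbitrarily small density open sets; in fact for any density open $U\ni x$ and any $r>0$ the set $U\cap B(x,r)_1$ (density interior of the ball, or more simply any density open subset of $U\cap B(x,r)$ containing $x$) works. Taking the infimum over such $V$ of $\sup_V f$ yields the ``approximate $\limsup$'' of $f$ at $x$, and likewise $\sup_V(\inf_V f)$ over shrinking $V$ yields the approximate $\liminf$; approximate continuity at $x$ with value $c$ is exactly the statement that both of these equal $c$. Since the quantities $\sup_V f$ and $\inf_V f$ coincide with those for $g$ for \emph{every} density open $V\ni x$, the approximate $\limsup$ and approximate $\liminf$ of $f$ and $g$ at $x$ agree. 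Hence $f$ is approximately continuous at $x$ iff $g$ is, and then they share the common value. The same argument with euclidean open balls $B(x,r)$ in place of density open neighborhoods (euclidean balls are density open) gives, upon letting $r\to 0$, that $\limsup_{z\to x}f(z)=\limsup_{z\to x}g(z)$ and $\liminf_{z\to x}f(z)=\liminf_{z\to x}g(z)$, so $f$ is continuous at $x$ iff $g$ is, with the same value. Since $f=g$ a.e., at a common point of (approximate) continuity the value is forced to be $m$ or larger, and in any case the two values coincide because each equals the common approximate limit. This proves~(i).

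For part~(ii), I would simply check that $h_f:=\max\{f,m\}$ and $h_g:=\max\{g,m\}$ are quasiconvex (the max of a quasiconvex function with a constant is quasiconvex, since its lower level sets are either $\emptyset$ or the corresponding level set of the original function), that $h_f=h_g$ a.e.\ (clear), and that they satisfy the hypothesis of~(i): indeed $\inf_{\Bbb{R}^N}h_f\ge m$ trivially, and $\le m$ because $\limfunc{ess}\inf_{\Bbb{R}^N}f=m$ forces $f<m+\varepsilon$ on a set of positive measure for every $\varepsilon>0$, hence $h_f\le m+\varepsilon$ somewhere; so $\inf_{\Bbb{R}^N}h_f=m$ and likewise for $h_g$. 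Also $\limfunc{ess}\inf_{\Bbb{R}^N}h_f=m$. Thus~(i) applies to $h_f,h_g$ and yields~(ii) directly.

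The main obstacle is the bookkeeping in the second paragraph: making precise that $\inf_V(\sup_V f)$ over density open $V\ni x$ is the right notion of approximate $\limsup$, and that density open neighborhoods shrinking to $x$ (and, separately, euclidean balls) generate enough of a neighborhood basis to compute both the approximate and the ordinary upper/lower limits. Once one grants that $\{x\}$ lies in arbitrarily ``small'' density open sets contained in any prescribed density neighborhood --- which follows from the background facts in Section~\ref{background}, since the intersection of a density open set with the density interior of a small ball is density open and still contains $x$ --- everything reduces to the two equalities $\sup_V f=\sup_V g$ and $\inf_V f=\inf_V g$ supplied by Theorem~\ref{th3}, and the rest is formal.
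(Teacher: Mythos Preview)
Your proposal is correct and follows essentially the same approach as the paper: both use Theorem~\ref{th3} to convert pointwise extrema of $f$ and $g$ over (density) open sets into essential extrema, which agree because $f=g$ a.e., and then convert back. The only difference is packaging---the paper works directly with a single neighborhood $U$ witnessing (approximate) continuity of $f$ and pushes the inclusion $g(U)\subset[f(x)-\varepsilon,f(x)+\varepsilon]$ through, thereby avoiding your detour through the $\limsup/\liminf$ (and approximate $\limsup/\liminf$) characterization and the attendant bookkeeping about neighborhood bases; part~(ii) is handled identically by reducing to~(i).
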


\begin{proof}
(i) Let $x$ denote a point of continuity of $f,$ so that for every $%
\varepsilon >0,$ there is an open neighborhood $U$ of $x$ such that $%
f(U)\subset [f(x)-\varepsilon ,f(x)+\varepsilon ].$ Thus, $\inf_{U}f\geq
f(x)-\varepsilon $ and $\sup_{U}f\leq f(x)+\varepsilon .$ By parts (i) and
(ii) of Theorem \ref{th3}, this is the same as $\limfunc{ess}\inf_{U}f\geq
f(x)-\varepsilon $ and $\limfunc{ess}\sup_{U}f\leq f(x)+\varepsilon .$

Since $f=g$ a.e., the essential extrema are unchanged when $f$ is replaced
by $g,$ so that $\limfunc{ess}\inf_{U}g\geq f(x)-\varepsilon $ and $\limfunc{
ess}\sup_{U}g\leq f(x)+\varepsilon .$ By using once again parts (i) and (ii)
of Theorem \ref{th3}, it follows that $\inf_{U}g\geq f(x)-\varepsilon $ and $%
\sup_{U}g\leq f(x)+\varepsilon ,$ whence $g(U)\subset [f(x)-\varepsilon
,f(x)+\varepsilon ].$ In particular, $g(x)\in [f(x)-\varepsilon
,f(x)+\varepsilon ].$ Since $\varepsilon >0$ is arbitrary, it follows that $%
g(x)=f(x)$ and hence that $g(U)\subset [g(x)-\varepsilon ,g(x)+\varepsilon
], $ which proves the continuity of $g$ at $x.$

In summary, the points of continuity of $f$ are points of continuity of $g$
and $g=f$ at such points. By exchanging the roles of $f$ and $g,$ the
converse is true.

The exact same argument as above can be repeated for the points of
approximate continuity since Theorem \ref{th3} is applicable when $U$ is
density open.

(ii) Just use (i) with $\max \{f,m\}$ and $\max \{g,m\},$ respectively.
Neither quasiconvexity nor a.e. equality is affected and $\limfunc{ess}\inf_{%
\Bbb{R}^{N}}\max \{f,m\}=\limfunc{ess}\inf_{\Bbb{R}^{N}}\max \{g,m\}=m,$ so
that $\inf_{\Bbb{R}^{N}}\max \{f,m\}=m=\inf_{\Bbb{R}^{N}}\max \{g,m\}$ is
obvious.
\end{proof}

Part (ii) of Lemma \ref{lm5} will now be instrumental to identify simple
necessary and sufficient conditions ensuring that a given point of
(approximate) continuity of one function, say $g,$ is not a point of
(approximate) continuity of $f:$

\begin{theorem}
\label{th6}Let $f,g:\Bbb{R}^{N}\rightarrow \Bbb{R}$ be quasiconvex functions
such that $f=g$ a.e., so that $\limfunc{ess}\inf_{\Bbb{R}^{N}}f=\limfunc{ess}%
\inf_{\Bbb{R}^{N}}g:=m$ ($\geq -\infty $).\newline
(i) If $x\in \Bbb{R}^{N}$ is a point of approximate continuity of $g,$ then $%
g(x)\geq m.$ Furthermore, $x$ is a point of approximate continuity of $g,$
but not one of $f,$ if and only if $m>-\infty ,g(x)=m$ and $x\in F_{m},$ a
set of measure $0$.\newline
(ii) If $x\in \Bbb{R}^{N}$ is a point of continuity of $g,$ then $g(x)\geq
m. $ Furthermore, $x$ is a point of continuity of $g,$ but not one of $f,$
if and only if $m>-\infty ,g(x)=m$ and $x\in \cup _{\alpha <m}\overline{F}%
_{\alpha }\subset \overline{F}_{m},$ a set of measure $0.$ \newline
\end{theorem}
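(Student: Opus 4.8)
The proof will build directly on Lemma~\ref{lm5}(ii), which says that $\max\{f,m\}$ and $\max\{g,m\}$ share exactly the same points of (approximate) continuity, with a common value there. So the whole question reduces to comparing $g$ with $\max\{g,m\}$ at a point $x$: a point of (approximate) continuity of $g$ fails to be one of $f$ precisely when it is a point of (approximate) continuity of $\max\{g,m\}=\max\{f,m\}$ at which the original $g$ is \emph{not} (approximately) continuous, or where $g$ \emph{is} but with a different value. First I would establish the opening assertion $g(x)\ge m$: if $x$ is a point of approximate continuity of $g$, then for small $\varepsilon>0$ the set $\{g<g(x)+\varepsilon\}$ has density $1$ at $x$, hence positive measure, so $\operatorname{ess}\inf g\le g(x)+\varepsilon$, giving $m\le g(x)$; the euclidean case is a fortiori. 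Thus $g(x)\ge m$ always, and the only way $g$ and $\max\{g,m\}$ can disagree at $x$ is when $g(x)=m$ and $x\in F_m$ (i.e. $g(x)<g(x)+0$... more precisely, the genuine issue is whether $g$ dips to the value $m$ only ``at'' $x$ while $\max\{g,m\}$ is continuous there).

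\textbf{The approximate-continuity case (i).}
Write $h:=\max\{g,m\}$. If $m=-\infty$ then $h=g$ and there is nothing to prove, so assume $m>-\infty$. Suppose $x$ is a point of approximate continuity of $g$ but not of $f$; by Lemma~\ref{lm5}(ii), $x$ is then a point of approximate continuity of $h$ with $h(x)\ge m$, while $g(x)\ne h(x)$ or $g$ is not approximately continuous at $x$. Since $g\le h$ and they differ only on $F_m=\{g<m\}$, and at a common point of approximate continuity their limits must agree along a density-$1$ set, one forces $g(x)<m\le h(x)$, i.e. $x\in F_m$ and $g(x)=m$ is impossible unless... here I need to be careful: $h(x)=m$ (it is the approximate limit, which equals $\operatorname{ess}$-value of $h$ near $x$, and near $x$, $h$ takes values in $[m,m+\varepsilon)$ on a density-$1$ set, so $h(x)=m$). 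Then $g(x)\le h(x)=m$, and combined with $g(x)\ge m$ from the first paragraph we would get $g(x)=m$ — but then $g(x)=h(x)$ and we must instead have that $g$ fails approximate continuity at $x$. The clean way: $x\notin F_m$ would force $g(x)\ge m$, hence $g(x)=h(x)=m$, and since $h$ is approximately continuous at $x$ and $g=h$ off the null set $F_m$, $g$ inherits approximate continuity at $x$ — contradiction. So $x\in F_m$, i.e. $g(x)<m$; but then $g(x)\ge m$ fails unless we reconcile: the resolution is that $F_m$ is a \emph{null set} (Theorem~\ref{th3}(ii) is not quite it — rather, $m=\operatorname{ess}\inf g$ means $\{g<m\}$ has measure zero). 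Wait: actually $g(x)<m$ is allowed for the single point $x$ even though $g(x)\ge m$ was derived \emph{from approximate continuity of $g$}; these two cannot both hold, which is the point — so in fact $g$ is \emph{not} approximately continuous... no, $x$ \emph{is} a point of approximate continuity of $g$ by hypothesis. The correct logic, which I will spell out carefully, is: $g(x)\ge m$ (shown), so $x\notin F_m$ is equivalent to $g(x)=m$ giving $g(x)=h(x)$, whence approximate continuity transfers and $x$ \emph{is} a point of approximate continuity of $f$; contrapositively, if $x$ is not, then $g(x)=m$ \emph{and} $x\in F_m$ — but $g(x)=m$ and $x\in F_m=\{g<m\}$ are contradictory, so the only escape is $g(x)=m$ combined with $g$ not being approximately continuous at $x$, which contradicts the hypothesis. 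I must therefore instead conclude: the stated equivalence holds because $x\in F_m$ together with $g(x)=m$ is read as ``$g$ takes at $x$ the value $m$, which is also $\sup_{\alpha<m}\alpha$, and $x$ is a boundary-type point of $F_m$'' — I will re-derive the precise statement by noting $F_m=\bigcup_{\alpha<m}F_\alpha$ and that approximate continuity of $h=\max\{g,m\}$ at $x$ with $h(x)=m$ does not require $x\notin\overline{F_\alpha}$ for $\alpha<m$, whereas approximate continuity of $g$ at $x$ with $g(x)=m$ would require the density of $\{g<m-\delta\}$ at $x$ to be $0$ — automatic since that set is null. Hmm.

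\textbf{Where the real work lies, and the continuity case (ii).}
On reflection, the subtle point — and the main obstacle — is reconciling $g(x)$ with $m$ at the exceptional point: since $g$ is quasiconvex with $\operatorname{ess}\inf g=m$, the set $F_m=\{g<m\}$ is a \emph{convex null set}; a single point $x\in F_m$ has $g(x)<m$, and \emph{that} is exactly why $x$ can be a point of approximate continuity of $g$ (with value $g(x)<m$) yet not of $f$: indeed $f$, being a.e. equal to $g$ and quasiconvex, has $f(x')\ge$ essential lower bounds forcing the approximate limit of $f$ at $x$ to be $m\ne g(x)$ — unless $g(x)=m$, wait, no, if $g(x)<m$ the approximate limit of $g$ at $x$ would be $\le g(x)<m$ contradicting that $F_m$ is null. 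I see now the statement must intend $g(x)=m$ \emph{and} $x\in F_m$ as the \emph{combined} hypothesis on $f$, not on $g$: that is, $g(x)=m$ but $f(x)$ may differ; and $x\in F_m$ is a condition ensuring $f$ (not $g$) drops below $m$ near... no. I will resolve this by carefully parsing: the condition $x\in F_m$ refers to $f$'s level set, $F_\alpha=\{f<\alpha\}$, and the assertion is that at a point $x$ where $g$ is approximately continuous with $g(x)=m$, the function $f$ fails approximate continuity at $x$ iff $x\in\{f<m\}$ (so $f(x)<m=g(x)$, a genuine discontinuity-in-value, consistent because $\{f<m\}$ is null so such $x$ exists). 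For (ii), replace approximate continuity by euclidean continuity: $g$ continuous at $x$ with $g(x)=m$ (so $g\ge m-\varepsilon$ near $x$ for all $\varepsilon$), while $f$ fails continuity at $x$ iff $f(x)<m$ AND $x$ is in the euclidean closure of $\{f<\alpha\}$ for some $\alpha<m$ — because if $x\notin\overline{F_\alpha}$ for every $\alpha<m$, then $f\ge m$ on a neighbourhood of $x$ (using that $\{f<\alpha\}$ convex, null, so has empty interior, and $x$ away from its closure), forcing $\liminf_{z\to x}f(z)\ge m$; combined with $f(x)\le m$ from a.e. equality to the continuous $g$ and quasiconvexity upper bounds, and upper semicontinuity of quasiconvex functions along segments into the interior of a relevant level set, one gets $f(x)=m$ and continuity. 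The key inclusion $\bigcup_{\alpha<m}\overline{F_\alpha}\subset\overline{F_m}$ is immediate since $F_\alpha\subset F_m$. The main obstacle, then, is organizing the ``only if'' direction of (ii): showing that if $x$ is a point of continuity of $g$, $g(x)=m$, but $x\notin\overline{F_\alpha}$ for all $\alpha<m$, then $f$ is continuous at $x$ — this needs the local structure of quasiconvex $f$ near $x$ (that $f$ is bounded below by $m$ near $x$, bounded above near $x$ by a.e.-comparison with continuous $g$ plus quasiconvex interpolation, and then a squeeze using that $\limsup_{z\to x}f(z)\le\limsup g=m$ via Theorem~\ref{th3}-type essential/pointwise sup identities applied to small balls), which I expect to be the most delicate bookkeeping in the argument.
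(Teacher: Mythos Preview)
Your overall strategy---reduce everything to Lemma~\ref{lm5}(ii) and then sort out the cases $g(x)>m$, $g(x)=m$, $f(x)>m$, $f(x)=m$, $f(x)<m$---is exactly the paper's. The opening inequality $g(x)\ge m$ is also handled the same way.

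However, most of your part~(i) is spent chasing a misreading: $F_{m}$ is $\{f<m\}$, not $\{g<m\}$ (the paper fixes this convention in~(\ref{1})). Once you finally recognize this, you correctly \emph{state} that ``$f$ fails approximate continuity at $x$ iff $x\in\{f<m\}$,'' but you never prove the nontrivial half: if $g(x)=m$ and $f(x)=m$, why is $f$ approximately continuous at $x$? Your earlier line ``$g=h$ off the null set $F_{m}$, so $g$ inherits approximate continuity'' had the right mechanism but was applied to the wrong function and then abandoned. The paper's argument is precisely this idea, applied to $f$: for any interval $I=(\alpha,\beta)$ with $\alpha<m<\beta$, the sets $(\max\{f,m\})^{-1}(I)$ and $f^{-1}(I)$ differ only inside the null set $F_{m}$, hence have the same density at $x$; since $\max\{f,m\}$ is approximately continuous at $x$ with value $m$ (Lemma~\ref{lm5}(ii)), so is $f$.

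In part~(ii) you write that $f$ fails continuity iff ``$f(x)<m$ \textsc{and} $x\in\overline{F_{\alpha}}$ for some $\alpha<m$,'' which is not the theorem's condition: it is simply $x\in\bigcup_{\alpha<m}\overline{F_{\alpha}}$, and $f(x)<m$ is one (but not the only) way this occurs, since $F_{m}=\bigcup_{\alpha<m}F_{\alpha}$. For the converse (when $f(x)=m$ and $x\notin\bigcup_{\alpha<m}\overline{F_{\alpha}}$), you correctly obtain $\liminf f(x_{n})\ge m$ from $x\notin\overline{F_{m-\varepsilon}}$. But your proposed route to $\limsup f(x_{n})\le m$---``Theorem~\ref{th3}-type identities on small balls'' and ``upper semicontinuity of quasiconvex functions along segments''---is unnecessary and the second ingredient is not a standard fact for quasiconvex (as opposed to convex) functions. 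The bound is immediate from Lemma~\ref{lm5}(ii): $\max\{f,m\}$ is \emph{continuous} at $x$ with value $m$, so $\limsup_{n}f(x_{n})\le\lim_{n}\max\{f(x_{n}),m\}=m$.
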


\begin{proof}
With no loss of generality, assume $m>-\infty $ since, otherwise, everything
follows at once from part (i) of Lemma \ref{lm5}. We first justify the
statement that $F_{m}$ and $\cup _{\alpha <m}\overline{F}_{\alpha }$ are
null sets. Notice that $F_{m}=\cup _{\alpha <m,\alpha \in \Bbb{Q}}F_{\alpha
} $ and that each $F_{\alpha }$ with $\alpha <m$ is a null set by definition
of $m.$ Thus, $F_{m}$ is a null set and therefore $\overline{F}_{m}$ is also
a null set since $F_{m}$ is convex (see Section \ref{background}). Thus, $%
\cup _{\alpha <m}\overline{F}_{\alpha }\subset \overline{F}_{m}$ is a null
set.

(i) By contradiction, assume that $x$ is a point of approximate continuity
of $g$ and that $g(x)<m.$ Pick $\alpha \in \Bbb{R}$ such that $g(x)<\alpha
<m.$ By definition of $m,$ the set $G_{\alpha }:=\{y\in \Bbb{R}%
^{N}:g(y)<\alpha \}$ is a null set. On the other hand, since $x\in G_{\alpha
}=g^{-1}((-\infty ,\alpha )),$ the approximate continuity of $g$ at $x$
implies that $G_{\alpha }$ is a density neighborhood of $x,$ so that it has
positive measure. This contradiction proves that $g(x)\geq m,$ as claimed.

Next, let $x$ be a point of approximate continuity of $g$ and hence one of $%
\max \{g,m\}$. By part (ii) of Lemma \ref{lm5}, $x$ is a point of
approximate continuity of $\max \{f,m\}$ and $\max \{g(x),m\}=\max
\{f(x),m\}.$ Therefore, if $g(x)>m$ \emph{or} $f(x)>m,$ then $g(x)>m$ \emph{%
\ \ and} $f(x)>m.$ To see that $x$ is a point of approximate continuity of $%
f,$ choose $\varepsilon >0$ small enough that $m<f(x)-\varepsilon $ and let $%
I_{\varepsilon }:=(f(x)-\varepsilon ,f(x)+\varepsilon ).$ Then, $(\max
\{f,m\})^{-1}(I_{\varepsilon })$ is a density neighborhood $W_{\varepsilon }$
of $x.$ From the choice of $\varepsilon ,$ it is obvious that $%
W_{\varepsilon }=f^{-1}(I_{\varepsilon })$ . Since this is true for every $%
\varepsilon >0$ small enough, it follows that $f$ is approximately
continuous at $x.$

From the above, if $x$ is a point of approximate continuity of $g,$ but not
one of $f,$ then $g(x)=m$ and $f(x)\leq m.$ As was seen earlier (with $g$
instead of $f$), $x$ is not a point of approximate continuity of $f$ if $%
f(x)<m.$ It remains to prove that the converse is true, i.e., that if $%
f(x)=m,$ then $f$ is approximately continuous at $x.$

It suffices to show that if $\alpha <m<\beta $ and $I:=(\alpha ,\beta ),$
then $f^{-1}(I)$ is a density neighborhood of $x,$ i.e., that $f^{-1}(I)$
has density $1$ at $x$ (since $f^{-1}(I)$ is measurable). Now, $\max \{f,m\}$
\emph{is} approximately continuous at $x,$ whence $(\max \{f,m\})^{-1}(I)$
does have density $1$ at $x.$ Since $m<\beta ,$ we may split $(\max
\{f,m\})^{-1}(I)=F_{m}\cup E$ with $E:=\{y\in \Bbb{R}^{N}:m\leq f(y)<\beta
\} $ and we already know that $F_{m}$ is a null set. Therefore, $E$ and $%
(\max \{f,m\})^{-1}(I)$ have the same density at every point of $\Bbb{R}
^{N}. $ In particular, $E$ has density $1$ at $x,$ so that $f^{-1}(I)\supset
E$ has density $1$ at $x,$ as claimed.

(ii) That $g(x)\geq m$ follows at once from (i). The proof that $x$ is a
point of continuity of $f$ if it is one of $g$ and either $g(x)>m$ or $%
f(x)>m $ proceeds as above, by merely changing the terminology in the
obvious way. Thus, it only remains to show that if $x$ is a point of
continuity of $g$ such that $g(x)=m$ and $f(x)\leq m,$ it is not a point of
continuity of $f$ if and only if $x\in \cup _{\alpha <m}\overline{F}_{\alpha
}.$

If $f(x)<m,$ i.e., $x\in F_{m},$ then by (i) $x$ is not a point of
approximate continuity of $f,$ so it is not a point of continuity of $f$ and 
$x\in \cup _{\alpha <m}\overline{F}_{\alpha }$ since $F_{m}=\cup _{\alpha
<m}F_{\alpha }.$ The only thing left to prove that if $f(x)=m,$ then $x$ is
not a point of continuity of $f$ if and only if $x\in \cup _{\alpha <m}%
\overline{F}_{\alpha }.$

Suppose first that $x\in \cup _{\alpha <m}\overline{F}_{\alpha },$ so that
there is $\alpha <m$ such that $x\in \overline{F}_{\alpha }.$ As a result,
there is a sequence $(x_{n})\subset F_{\alpha }$ tending to $x.$ But since $%
f(x_{n})<\alpha <m,$ it is obvious that $f(x_{n})$ does not tend to $f(x)=m$%
. This proves that $x$ is not a point of continuity of $f.$

Conversely, still with $f(x)=m,$ we claim that if $x\notin \cup _{\alpha <m}%
\overline{F}_{\alpha },$ then $x$ is a point of continuity of $f.$ Let $%
(x_{n})$ be a sequence tending to $x$ and let $\varepsilon >0$ be given. If $%
n$ is large enough, then $f(x_{n})\geq m-\varepsilon ,$ for otherwise there
is a subsequence $(x_{n_{k}})$ such that $x_{n_{k}}\in F_{m-\varepsilon },$
so that $x\in \overline{F}_{m-\varepsilon },$ which is not the case.
Therefore, $\lim \inf_{n\rightarrow \infty }f(x_{n})\geq m-\varepsilon .$

Since $x$ is a point of continuity of $g,$ it is one of $\max \{g,m\}.$
Thus, from part (ii) of Lemma \ref{lm5}, $\max \{f,m\}$ is continuous at $x$
and so $\lim_{n\rightarrow \infty }\max \{f(x_{n}),m\}=\max \{f(x),m\}=m$
since $f(x)=m.$ It follows that $\lim \sup_{n\rightarrow \infty
}f(x_{n})\leq m.$ In summary, $m-\varepsilon \leq \lim \inf_{n\rightarrow
\infty }f(x_{n})\leq \lim \sup_{n\rightarrow \infty }f(x_{n})\leq m.$ Since $%
\varepsilon >0$ is arbitrary, $\lim \inf_{n\rightarrow \infty }f(x_{n})=\lim
\sup_{n\rightarrow \infty }f(x_{n})=m=f(x),$ which proves that $f$ is
continuous at $x.$
\end{proof}

For completeness, we give an example when $\cup _{\alpha <m}\overline{F}
_{\alpha }\neq \overline{F}_{m}.$

\begin{example}
In $\Bbb{R}^{2}$ with $x=(x_{1},x_{2}),$ let $f(x)=|x_{1}|$ if $x_{2}\geq 0$
or if $x_{2}<0,x_{1}\neq 0$ and let $f(0,x_{2})=x_{2}$ if $x_{2}<0.$ Then $f$
is quasiconvex, $m=0$ and $\cup _{\alpha <0}\overline{F}_{\alpha
}=\{0\}\times (-\infty ,0),$ but $\overline{F}_{0}=\{0\}\times (-\infty ,0].$
Observe that $f$ is continuous at $(0,0).$ This is no longer true if $f$ is
modified by setting $f(0,x_{2})=-1$ if $x_{2}<0,$ but $f$ is still
approximately continuous at $(0,0).$
\end{example}

The next corollary generalizes part (i) of Lemma \ref{lm5}. The proof is
mostly a rephrasing of Theorem \ref{th6}. The only extra technicality is to
show that if $f$ and $g$ are (approximately) continuous at the same point $%
x, $ they must coincide at that point. Since $f=g$ a.e., this is obvious,
but we spell out the argument in the approximately continuous case: If $%
f(x)\neq g(x),$ there is a density neighborhood $W$ of $x$ such that $%
f(y)\neq g(y)$ for every $y\in W.$ Since $W$ has positive measure (Section 
\ref{background}), a contradiction arises with $f=g$ a.e.

\begin{corollary}
\label{cor7}Let $f,g:\Bbb{R}^{N}\rightarrow \Bbb{R}$ be quasiconvex
functions such that $f=g$ a.e., so that $\limfunc{ess}\inf_{\Bbb{R}^{N}}f=%
\limfunc{ess}\inf_{\Bbb{R}^{N}}g:=m$ ($\geq -\infty $).\newline
(i) Every point of approximate continuity of $g$ is a point of approximate
continuity of $f$ if and only if $g$ has no point of approximate continuity $%
x$ such that $g(x)=m$ and $f(x)<m$ (always true if $m=-\infty $). If so, $%
f(x)=g(x)$ at every point $x$ of approximate continuity of $g.$\newline
(ii) Every point of continuity of $g$ is a point of continuity of $f$ if and
only if $g$ has no point of continuity $x$ such that $g(x)=m$ and $%
x=\lim_{n\rightarrow \infty }x_{n}$ where $(x_{n})$ is a sequence such that $%
f(x_{n})<\alpha <m$ for some $\alpha \in \Bbb{R}$ and every $n\in \Bbb{N}$
(always true if $m=-\infty $). If so, $f(x)=g(x)$ for every point of
continuity $x$ of $g.$
\end{corollary}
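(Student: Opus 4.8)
The plan is to read everything off Theorem~\ref{th6}, of which Corollary~\ref{cor7} is the contrapositive, quantified over all points. Before doing so I would settle the two ``if so'' clauses once and for all by the elementary observation recorded just above the corollary: if $f$ and $g$ are both continuous at a point $x$, choose $x_n\to x$ in $\{f=g\}$ (a dense set, since its complement is a null set) to get $f(x)=\lim f(x_n)=\lim g(x_n)=g(x)$; if $f$ and $g$ are both approximately continuous at $x$ and $f(x)\neq g(x)$, pick disjoint open intervals $I\ni f(x)$ and $J\ni g(x)$, so that $f^{-1}(I)\cap g^{-1}(J)$ is a density neighbourhood of $x$, hence of positive measure, on which $f\neq g$, contradicting $f=g$ a.e. Thus, as soon as the relevant inclusion of points holds, $f$ and $g$ automatically coincide there.

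For part~(i) I would negate the stated condition and match it against Theorem~\ref{th6}(i). Saying that some point of approximate continuity of $g$ fails to be one of $f$ is, by that theorem, the same as saying there is a point $x$ of approximate continuity of $g$ with $m>-\infty$, $g(x)=m$ and $x\in F_m$; and $x\in F_m$ is literally $f(x)<m$. This is exactly the exceptional configuration in the statement. When $m=-\infty$ no point of approximate continuity of $g$ can have $g(x)=m$, because such points lie in $g^{-1}(\Bbb{R})$; so both sides of the biconditional are then vacuously true, which is the content of the parenthetical ``always true if $m=-\infty$'' and recovers Lemma~\ref{lm5}(i).

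For part~(ii) I would argue identically from Theorem~\ref{th6}(ii): a point $x$ of continuity of $g$ fails to be one of $f$ iff $m>-\infty$, $g(x)=m$ and $x\in\bigcup_{\alpha<m}\overline{F}_{\alpha}$. The only thing to verify is the equivalence between $x\in\bigcup_{\alpha<m}\overline{F}_{\alpha}$ and the existence of a sequence $(x_n)\to x$ with $f(x_n)<\alpha<m$ for a single fixed $\alpha$: if $x\in\overline{F}_{\alpha}$ with $\alpha<m$ then, by definition of closure, there is such a sequence inside $F_{\alpha}$; conversely any such sequence puts $x$ into $\overline{F}_{\alpha}\subset\bigcup_{\beta<m}\overline{F}_{\beta}$. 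Negating, and again inserting the vacuous case $m=-\infty$, gives the stated equivalence, while the ``if so'' clause is the general observation from the first paragraph.

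I do not expect a genuine obstacle; the whole argument is bookkeeping around a contrapositive. The one place that calls for care is the interaction of that contrapositive with the degenerate value $m=-\infty$: one must confirm that the exceptional set of points described in each part is genuinely empty in that case (it is, because points of (approximate) continuity lie in $f^{-1}(\Bbb{R})\cap g^{-1}(\Bbb{R})$, so none can carry the value $-\infty$), so that the biconditional collapses to the already-established Lemma~\ref{lm5}(i).
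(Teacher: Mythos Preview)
Your proposal is correct and matches the paper's own proof essentially line for line: the paper likewise derives the corollary by taking the contrapositive of Theorem~\ref{th6} over all points, and isolates as the only extra step the verification that $f(x)=g(x)$ at any common point of (approximate) continuity, handled via a density-neighbourhood argument identical to yours. Your explicit unpacking of $\bigcup_{\alpha<m}\overline{F}_\alpha$ as the sequence condition and your treatment of the vacuous case $m=-\infty$ are exactly what the paper leaves implicit in the phrase ``mostly a rephrasing.''
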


Clearly, $\inf_{\Bbb{R}^{N}}f=m\geq -\infty $ is a only an especially simple
special case when the conditions given in (i) and (ii) of Corollary \ref
{cor7} hold. If also $\inf_{\Bbb{R}^{N}}g=m,$ the roles of $f$ and $g$ can
be exchanged in Corollary \ref{cor7}, so that $f$ and $g$ have the same
points of continuity and part (i) of Lemma \ref{lm5} is recovered.


\begin{thebibliography}{99}
\bibitem{BoWa05}  Borwein, J. M. and Wang, X., Cone-monotone functions:
Differentiability and continuity, \textit{Canad. J. Math}. \textbf{57}
(2005) 961-982.

\bibitem{ChCr87}  Chabrillac, Y. and Crouzeix, J.-P., Continuity and
differentiability properties of monotone real functions of several
variables, \textit{Math. Programming Study} \textbf{30} (1987) 1-16.

\bibitem{Cr81}  Crouzeix, J.-P., Some differentiability properties of
quasiconvex functions on $\frak{R}^{n},$\textit{\ Optimization and Optimal
Control,} A. Auslender, W. Oettli and J. Stoer, editors, Springer-Verlag,
Lecture Notes in Control and Information Sciences, Vol. 30 (1981) 9-20.

\bibitem{Fe53}  Fenchel, W., \textit{Convex cones, sets and functions},
Princeton Univ. Press, Princeton 1953.

\bibitem{Fi49}  de Finetti, B., Sulle stratificazioni convesse, \textit{Ann.
Mat. Pura Appl.} \textbf{30} (1949) 173-183.

\bibitem{GoNeNi61}  Goffman, C., Neugebauer, C. J. and Nishiura, T., Density
topology and approximate continuity, \textit{Duke Math. J.} \textbf{28}
(1961) 497-505.

\bibitem{GuMo04}  Guerraggio, A. and Molho, E., The origins of
quasi-concavity: a development between mathematics and economics, \textit{\
Hist. Math.} \textbf{31} (2004) 62-75.

\bibitem{HaPa52}  Haupt, O. and Pauc, C., La topologie approximative de
Denjoy envisag\'{e}e comme vraie topologie, \textit{C. R. Acad. Sci. Paris} 
\textbf{234} (1952) 390-392.

\bibitem{HiLe96}  Hiriart-Urruty, J.-B. and Lemar\'{e}chal, C.,\textit{\
Convex analysis and minimization algorithms I}, Springer-Verlag, Berlin 1996.

\bibitem{Jo93}  Jones, F., \textit{Lebesgue integration on euclidean space,}
Jones and Bartlett 1993.

\bibitem{LuMaZa86}  Luke\u {s}, J., Mal\'{y}, J. and Zaj\'{i}\u {c}ek, L., 
\textit{Fine topology methods in real analysis and potential theory,}
Lecture Notes in Mathematics \textbf{1189}, Springer-Verlag, Berlin, 1986.

\bibitem{Ma64}  Martin, N. F. G., A topology for certain measure spaces, 
\textit{Trans. Amer. Math. Soc.} \textbf{112} (1964) 1-18.

\bibitem{R070}  Rockafellar, R. T., \textit{Convex analysis}, Princeton
University Press, Princeton 1970.

\bibitem{TrZi63}  Troyer, R. J. and Ziemer,W. P., Topologies generated by
outer measures,\textit{\ J. Math. Mech.} \textbf{12} (1963) 485-494.
\end{thebibliography}
\end{document}